\newcommand{\Sh}{\mathrm{Sh}}
\renewcommand{\phi}{\varphi}
\renewcommand{\Vec}{\mathbf{Vec}}
\DeclareMathOperator{\Span}{Span}
\DeclareMathOperator{\II}{\mathcal{I}}
\DeclareMathOperator{\subrk}{subrk}
\let\olditemize\itemize
\renewcommand{\itemize}{
  \olditemize
  \setlength{\itemsep}{1em} % Adjust this value as needed
}
\numberwithin{equation}{section}
\begin{document}
%\title{Short Certificates for Tensor Invariants}
%\title{Finite Characterisations of}
\title{A Functorial Version of Chevalley's Theorem on Constructible Sets}
\author{Andreas Blatter} 
\thanks{AB was supported by Swiss National Science
Foundation grant 200021\_191981}
\email{andreas.blatter@unibe.ch}
\address{Mathematical Institute, University of Bern,
Alpeneggstrasse 22, 3012 Bern, Switzerland}
\maketitle

\begin{abstract}
    To determine whether an $n\times n$-matrix has rank at most $r$ it suffices to check that the $(r+1)\times (r+1)$-minors have rank at most $r$. In other words, to describe the set of $n\times n$-matrices with the property of having rank at most $r$, we only need the description of the corresponding subset of $(r+1)\times (r+1)$-matrices. We will generalize this observation to a large class of subsets of tensor spaces. A description of certain subsets of a high-dimensional tensor space can always be pulled back from a description of the corresponding subset in a fixed lower-dimensional tensor space.
\end{abstract}

\section{Introduction}

%\subsection{Tensor Rank}

%The most important invariant for tensors is tensor rank. Let $T \in
%(K^n)^{\otimes d}$ (throughout this paper, we will essentially only
%consider equidimensional tensors for simplicity, but the results can be
%generalized to general tensors). The rank of $T$ is defined as the smallest
%integer $r$ such that $T$ lies in the image of the map

%\begin{align*}
%    ((K^n)^d)^r &\to (K^n)^{\otimes d} \\
%    (v_{ij}) &\mapsto \sum_{i=1}^r v_{i1} \otimes \hdots \otimes v_i{d}
%\end{align*}

\subsection{Polynomials of Tensors}

Let $K$ be either $\RR$ or $\CC$, $A \in (K^n)^{\otimes d_1}$ and $B \in (K^n)^{\otimes d_2}$. There is a natural way to multiply $A$ and $B$ by taking the tensor product: Writing $A=(a_{i_1 \hdots i_{d_1}})_{i_1 \hdots i_{d_1}}$, $B=(b_{j_1 \hdots j_{d_2}})_{j_1 \hdots j_{d_2}}$, then

$$A \otimes B = (a_{i_1 \hdots i_{d_1}}b_{j_1 \hdots j_{d_2}})_{i_1 \hdots i_{d_1}j_1 \hdots j_{d_2}} \in (K^n)^{\otimes d_1+d_2}.$$

If $d_1=d_2$, we can also add $A$ and $B$ simply by component-wise addition $A+B$. And we can also perform scalar multiplication $A\mapsto \lambda A$ for some $\lambda \in K$. Combining these operations we obtain polynomials of tensors that map from some direct sum of tensor spaces $(K^n)^{\otimes d_1} \oplus \hdots \oplus (K^n)^{\otimes d_m}$ to some tensor space $(K^n)^{\otimes e}$.

\begin{ex} \label{example:tensor}
    Let $d$ and $r$ be fixed,  
    \begin{align*}
        \alpha_n: ((K^n)^{\otimes 1})^{\oplus d\cdot r} &\to (K^n)^{\otimes d}\\
        (v_{11}, \hdots, v_{1d}, \hdots, v_{r1}, \hdots, v_{rd}) &\mapsto \sum_{i=1}^r v_{i1}\otimes\hdots\otimes v_{id}.
    \end{align*}

    The image of $\alpha_n$ is the set of tensors in $(K^n)^{\otimes d}$ with tensor rank smaller or equal to $r$.
\end{ex}

\begin{ex}
    Let
    \begin{align*}
    \alpha_n: ((K^n)^{\otimes 2})^{\oplus 3} &\to (K^n)^{\otimes 4}\\
    (A, B, C) &\mapsto A \otimes B - C\otimes C
    \end{align*}
\end{ex}

\subsection{Images of Tensor Polynomials} We now ask how we can describe the image of such a polynomial $\alpha_n$. If we fix the integer $n$, then there already exists a satisfying answer: In case $K=\CC$, Chevalley's Theorem on constructible sets says that an image of a constructible set under a polynomial map is again constructible. A constructible set in some complex vector space, say $V$, is a set that can be described by a finite boolean combination of polynomial equations and inequations, or in other words, it is a finite union of sets of the form

$$\{v \in V: f_1(v)=\hdots=f_k(v)=0 \text{ and } g(v) \neq 0\},\,\,\,f_1, \hdots, f_k, g \in \CC[V].$$

So, in conclusion, by Chevalley's Theorem, the image of $\alpha_n$ is constructible, since it is the image of a whole (constructible) vector space, under a polynomial map.\\

If $K=\RR$ there is an analogous theorem by Tarski-Seidenberg that says that an image of a semialgebraic set under a polynomial map is again semialgebraic. A semialgebraic set is also a set that can be described by finitely many equations and inequations, but when we use the word inequation in this setting, we do not just mean "$\neq$", but also "$>$" and "$\geq$". So, also when working over $\RR$, the image of $\alpha_n$ can be described by finitely many equations and inequations.

% We can write all semialgebraic sets as finite unions of sets of the form

% $$\{v \in V: f(v)=0\} \text{ and } \{v \in V: g(v)>0\},\,\,\,f, g \in \RR[V].$$

\subsection{Images of Infinite Collections of Tensor Polynomials}

The point of this paper is that we do not want $n$ to be fixed, but we want to give a finite (implicit) description of the whole collection of the images, say $(\im(\alpha_n))_n.$ Our main result basically says that there exists $m\in \NN$ such that $\im(\alpha_m)$ already completely describes the whole collection $(\im(\alpha_n))_n.$ To make this more precise, note that if $\phi: K^n \to K^m$ is a linear map, we get a linear map

\begin{align*}
    \phi^{\otimes e}:(K^n)^{\otimes e} &\to (K^m)^{\otimes e}\\
    v_1 \otimes \hdots \otimes v_e &\mapsto \phi(v_1) \otimes \hdots \otimes \phi(v_e)
\end{align*}

Note that $(\im(\alpha_n))_n$ is invariant under $\phi^{\otimes e}$, meaning that if $\phi: K^n \to K^m$, $p\in \im(\alpha_n)$, then $\phi^{\otimes e}(p) \in \im(\alpha_m)$. We claim, that there exists $m\in \NN$ such that for every $n\in\NN$

$$\im(\alpha_n) = \{p \in (K^n)^{\otimes e}:\text{for every linear map }\phi:K^n \to K^m,\,\, \phi^{\otimes e}(p) \in \im(\alpha_m)\}.$$

So, the (in-)equations that describe $\im(\alpha_n)$ can be pulled back from the (in-)equations that describe $\alpha_m$. In the case of Example \ref{example:tensor} it has been known before that this works with $m=d$, but this is an easier example, because it considers only a polynomial on vectors and not higher dimensional tensors.

\subsection{The Results} Our results are slightly more general than the situation described above. Let us use coordinate-free notation from now on. Note that $V \mapsto (V)^{\otimes d_1} \oplus \hdots \oplus (V)^{\otimes d_m}$ is a functor from the category of finite-dimensional $K$-vector-spaces to itself. In general, a polynomial functor, $P$, is a functor of this form, or a subfunctor thereof (e.g. $S^2$, which is a subfunctor of $V\mapsto V^{\otimes 2}$, is also a polynomial functor, see section $\ref{section:polfunc}$ for a precise definition).\\

The functorial equivalents of polynomial maps are called polynomial transformations (see section \ref{section:poltrans}), but they are essentially just combinations of tensoring and component-wise addition, as above. Finally, a constructible/semialgebraic subset $X \subseteq P$ assigns to every vector space $V$ a constructible/semialgebraic set $X(V)\subseteq P(V)$, such that for every $\phi \in \Hom(V, W)$, $P(\phi)(X(V)) \subseteq X(W)$, and that is determined by a specific vector space $U$ (which plays the same role as the integer $m$ in the previous section), see also Definition \ref{definition:constructible}.\\

With this language we will present a proof of a functorial version of Chevalley's Theorem: The image of any constructible subset under a polynomial transformation is again a constructible subset (Theorem \ref{Theorem:chevalley}). In the real case, we do not have a perfect equivalent for Tarski-Seidenberg's Theorem, i.e. we do not know if the image of any semialgebraic subset is again semialgebraic, but we can prove that the image of any closed subset (i.e. $X(V)$ is closed for every vector space $V$) is semialgebraic.

\subsection{Structure of the Paper}

In section \ref{section:preliminaries}, we will give a complete introduction to polynomial functors. In section \ref{section:constsemi}, we define the main objects of this paper, constructible/semialgebraic subsets of polynomial functors. Section \ref{section:parameterisation} gives a parameterisation result for constructible subsets, i.e. we will show that every constructible subset is a union of images of some ``nice" (in particular, closed) subsets. This result is obviously wrong for semialgebraic subsets. So, in order to prove our version of Chevalley's Theorem in section \ref{section:chevalley}, we will only have to prove that images of these nice subsets are constructible, which is essentially all we can prove in the real case.

\subsection{Related Work}

The main result in \cite{Draisma17} implies that the (Zariski-)closure of every image of a polynomial transformation is constructible/semialgebraic in the sense above. In \cite{Blatter22}, the corresponding result over finite fields is proven. In \cite{Bik21}, a similar-looking version of Chevalley's Theorem is proven: Instead of working with an infinite collection of finite-dimensional spaces, they work in one infinite-dimensional space, namely the projective limit of $P(K^1), P(K^2), P(K^3), \hdots$. A constructible set in such a space is a subset that is given by finitely many equations and inequations, and that is invariant under an action of an inductive limit of $(\GL_n)_n$. They prove that the image of a constructible set in this sense is again constructible. However, this result seems to be essentially different from ours, since attempts to derive our result from this have failed.

\subsection*{Acknowledgements}

I thank Jan Draisma for the helpful discussions, helping me find some of the proofs and proof-reading the paper.

\section{Preliminaries} \label{section:preliminaries}

We denote by $\CC$ the field of complex numbers, although this can be
replaced by any other algebraically closed field of characteristic 0.
Similarly, $\RR$ denotes the field of real numbers, or any real closed
field of characteristic 0 (i.e. a field that is not algebraically closed,
but becomes algebraically closed when adjoining the square root of $-1$).
The letter $K$ may refer to both $\CC$ and $\RR$.

\subsection{Polynomial Functors}\label{section:polfunc}

Let $\Vec$ be the category of finite-dimensional $K$-vector spaces. We
write $\Hom(U,V)$ for the space of $K$-linear maps $U \to V$.

\begin{de} \label{definition:polfunctor}
A {\em polynomial functor} over $K$ is a covariant
functor $P:\Vec \to \Vec$ such that for any $U,V \in \Vec$ the
map $P:\Hom (U,V) \to \Hom(P(U),P(V))$ is polynomial of degree at
most some integer $d$ that does not depend on $U$ or $V$.
\end{de}

The phrase ``the map $P:\Hom (U,V) \to \Hom(P(U),P(V))$ is polynomial"
means that when choosing bases for $U$, $V$, $P(U)$ and $P(V)$, the map P that maps the 
matrix representation of $\phi \in \Hom (U,V)$ to the matrix representation
of $P(\phi) \in \Hom(P(U),P(V))$ must be polynomial. This notion is 
independent of the choice of bases.\\

\begin{ex}\,
    \begin{enumerate}
        \item For a fixed $U \in \Vec$, the {\em constant functor} $P:V \mapsto U$, $\phi \mapsto \id$.
        \item The {\em identity functor} $T:V\mapsto V$, $\phi \mapsto \phi$.
        \item The {\em $d$-th direct sum} $T^{\oplus d}:V \mapsto V^{\oplus d}$, $\phi \mapsto \phi^{\oplus d}$.
        \item The {\em $d$-th tensor power} $T^{\otimes d}:V \mapsto V^{\otimes d}$, $\phi \mapsto \phi^{\otimes d}$. 
    \end{enumerate}
\end{ex}

The following definitions will allow us to give an intuitive
characterisation of all polynomial functors.

\begin{de}
    Let $P:\Vec \to \Vec$ be any functor. A functor $Q:\Vec \to \Vec$ is
    called a {\em subfunctor} of $P$ if $Q(V) \subseteq P(V)$
    for all $V$ and $Q(\varphi) = P(\varphi)|_{Q(V)}$ for all $\varphi \in
    \Hom(V, W)$
\end{de}

\begin{de}
    Let $P, Q:\Vec \to \Vec$ be functors. Notions like $P \oplus Q$, $P \otimes Q$ and, in case $Q$ is a subfunctor of $P$, $P/Q$ are defined elementwise in the obvious way (e.g. $(P \oplus Q)(V) := P(V) \oplus Q(V)$, and similarly for morphisms).
\end{de}

Now for the characterisation of polynomial functors, see e.g. \cite{Bik2020} for more detailed explanations:

\begin{prop} \label{proposition:polfuncchar}
    Let $P: \Vec \to \Vec$ be a functor. The following are equivalent:

    \begin{enumerate}
        \item $P$ is a polynomial functor.
        \item $P$ is isomorphic to a finite direct sum of subfunctors of $T^{\otimes d}$ and a constant polynomial functor.
        \item $P$ is isomorphic to a finite direct sum of quotients of $T^{\otimes d}$ and a constant polynomial functor.
    \end{enumerate}
\end{prop}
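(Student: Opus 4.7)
The implications $(2)\Rightarrow(1)$ and $(3)\Rightarrow(1)$ I would handle first as routine bookkeeping: constant functors are polynomial of degree $0$, the tensor power $T^{\otimes d}$ is polynomial of degree $d$, and both polynomiality and the degree bound are preserved by finite direct sums and by passage to subfunctors and to quotients. So the content lies in $(1)\Rightarrow(2)$ and $(1)\Rightarrow(3)$, and I would prove both by first decomposing $P$ into homogeneous components and then identifying each component as a sub/quotient of $T^{\otimes k}$.

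For the decomposition, I would exploit the scalar action. For each $V$ the polynomial degree bound says that $\lambda\mapsto P(\lambda\,\mathrm{id}_V)\in\mathrm{End}(P(V))$ is a polynomial of degree $\le d$ in $\lambda$, so that
$$P(\lambda\,\mathrm{id}_V)=\sum_{k=0}^d \lambda^k\,\pi_k^V\qquad\text{for some }\pi_k^V\in\mathrm{End}(P(V)).$$
The identities $P(\mathrm{id}_V)=\mathrm{id}$ and $P(\lambda\mu\,\mathrm{id}_V)=P(\lambda\,\mathrm{id}_V)P(\mu\,\mathrm{id}_V)$, read off as polynomial identities in $\lambda,\mu$, force the $\pi_k^V$ to be pairwise orthogonal idempotents summing to $\mathrm{id}_{P(V)}$; and comparing coefficients of $\lambda^k$ on the two sides of $P(\lambda\phi)=P(\phi)P(\lambda\,\mathrm{id}_V)=P(\lambda\,\mathrm{id}_W)P(\phi)$ yields $P(\phi)\pi_k^V=\pi_k^W P(\phi)$ for every $\phi\in\Hom(V,W)$. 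Hence $P_k(V):=\pi_k^V(P(V))$ is a subfunctor, $P=\bigoplus_{k=0}^d P_k$, each $P_k$ is homogeneous of degree $k$ (so $P_k(\lambda\phi)=\lambda^k P_k(\phi)$), and $P_0$ is a constant functor since $P_0(\phi)=P_0(0)$ for every $\phi$ by setting $\lambda=0$.

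For the identification, I would fix $k\ge 1$, set $Q:=P_k$, and polarize. Because $\phi\mapsto Q(\phi)$ is polynomial homogeneous of degree $k$ and we are in characteristic $0$, it has a unique symmetric $k$-linear natural lift $\widetilde Q:\Hom(V,W)^{\times k}\to\Hom(Q(V),Q(W))$ satisfying $\widetilde Q(\phi,\ldots,\phi)=k!\,Q(\phi)$. Applying $\widetilde Q$ with source $V_0:=K^k$, evaluating at elements $u\in U:=Q(V_0)$ and at $k$-tuples of rank-one maps $e_i^*\otimes v_i:V_0\to V$, and extracting the component multilinear in the $v_i$, produces a natural transformation
$$\mu_V:\,U\otimes V^{\otimes k}\longrightarrow Q(V).$$
By construction $\mu_{V_0}$ hits every element of $U$ (up to the factor $k!$), and naturality together with functoriality then forces $\mu_V$ to be surjective for all $V$. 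This realises $Q$ as a quotient of $\dim U$ copies of $T^{\otimes k}$, giving $(3)$. The dual polarization construction, i.e.\ placing $Q(V)$ in the source rather than the target (polarizing $\phi\mapsto Q(\phi)^{\ast}$ in the $V$-variable), produces an injection $Q(V)\hookrightarrow U'\otimes V^{\otimes k}$ for some finite-dimensional $U'$, and gives $(2)$.

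The main obstacle is the last step: converting polarization into an \emph{honest} natural transformation of polynomial functors and verifying surjectivity/injectivity of $\mu_V$ uniformly in $V$, in particular for small $\dim V$ where the evaluation trick with $V_0=K^k$ cannot be applied directly. The cleanest way to package this is through the classical equivalence (in characteristic $0$) between polynomial functors of degree $k$ and polynomial representations of $\mathrm{GL}_k$, under which surjectivity of $\mu_V$ for all $V$ reduces to surjectivity at $V=K^k$, which is tautological by construction; a detailed treatment appears in \cite{Bik2020}.
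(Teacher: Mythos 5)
The paper does not prove this proposition at all; it simply cites \cite{Bik2020}, so there is no in-text argument to compare against. Your sketch follows the standard route one finds in that reference and in the broader literature (Friedlander--Suslin, Macdonald, etc.): split $P$ into homogeneous pieces via the torus action, then polarize each homogeneous piece to realise it as a sub/quotient of a direct sum of tensor powers. The decomposition step is written correctly and carefully: the identities $P(\mathrm{id})=\mathrm{id}$, $P(\lambda\mu\,\mathrm{id})=P(\lambda\,\mathrm{id})P(\mu\,\mathrm{id})$ and $P(\lambda\phi)=P(\phi)P(\lambda\,\mathrm{id}_V)=P(\lambda\,\mathrm{id}_W)P(\phi)$ do exactly what you say, and the argument that $P_0$ is constant is sound.

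The polarization step, however, has a real gap exactly where you claim ``by construction $\mu_{V_0}$ hits every element of $U$ (up to the factor $k!$)''. The relation $\widetilde Q(\phi,\dots,\phi)=k!\,Q(\phi)$ cannot be invoked directly, because $\mu_V$ only feeds $\widetilde Q$ arguments of the specific rank-one shape $e_i^\ast\otimes v_i$, and $\mathrm{id}_{V_0}=\sum_i e_i^\ast\otimes e_i$ is not of that shape. In fact one checks (by looking at the torus action on $Q(V_0)$) that $\widetilde Q(e_1^\ast\otimes v_1,\dots,e_k^\ast\otimes v_k)(u)$ kills every weight component of $u$ except the multilinear weight $(1,\dots,1)$, so at first sight $\mu_{V_0}$ only sees the multilinear part $M\subseteq U$, not all of $U$. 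To recover surjectivity one needs an extra argument: writing $\phi=\sum_j e_j^\ast\otimes v_j$ and expanding $k!\,Q(\phi)=\widetilde Q(\phi,\dots,\phi)$, the terms $\widetilde Q(e_{j_1}^\ast\otimes v_{j_1},\dots,e_{j_k}^\ast\otimes v_{j_k})$ with repeated $j$'s have to be shown to lie in the image of $\mu_V$; this works by precomposing with a suitable $\chi\in\mathrm{End}(V_0)$ (solving $e_i^\ast\circ\chi=e_{j_i}^\ast$), which converts $\widetilde Q(e_{j_1}^\ast\otimes v_1,\dots,e_{j_k}^\ast\otimes v_k)(u)$ into $\widetilde Q(e_1^\ast\otimes v_1,\dots,e_k^\ast\otimes v_k)(Q(\chi)u)$. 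Only then does $\mu_{V_0}$ (and then $\mu_V$ for general $V$, using additionally that $Q(V)$ is generated by the images $Q(\phi)(Q(K^k))$ over $\phi:K^k\to V$) become surjective. You do flag ``the main obstacle'' at the end, but you locate it in ``small $\dim V$''; the issue is already present at $V=V_0$, and it is a missing lemma rather than a bookkeeping inconvenience.

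Two smaller remarks. First, for $(1)\Rightarrow(2)$ your construction yields $Q_k$ as a subfunctor of $(T^{\otimes k})^{\oplus m}$, which is not literally a ``finite direct sum of subfunctors of $T^{\otimes k}$'' as stated; to close this you need semisimplicity of degree-$k$ polynomial functors in characteristic $0$ (which, once invoked, in fact gives $(2)\Leftrightarrow(3)$ for free and would let you skip the dual polarization entirely). Second, the ``dual polarization'' for $(2)$ is plausible but asserted rather than argued; the injectivity of the resulting $Q(V)\hookrightarrow U'\otimes V^{\otimes k}$ again rests on the same generation-by-$K^k$ fact and is not ``dual'' in any formal sense, since dualising a covariant polynomial functor leaves the category.
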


In other words, the set of polynomial functors is the smallest set of functors that contains constant functors and the identity functor, and is
closed under taking direct sums, tensor products and subfunctors (or
quotients). The irreducible polynomial functors (i.e. the polynomial
functors that cannot be written as a nontrivial direct sum) are exactly the
Schur functors (see e.g. \cite{Fulton91} for a definition of Schur functors) and the constant functor $V\mapsto K^1$. Every
polynomial functor has a unique decomposition as a direct sum of a constant
functor and Schur functors (e.g. $T^{\otimes 2} = S^2 \oplus \bigwedge^2$)

\begin{re}
    The requirement from Definition \ref{definition:polfunctor} that the
    degree of the maps $P(\phi)$ must be universally bounded rules out
    examples like $V \mapsto \bigwedge^0(V) \oplus \bigwedge^1(V) \oplus 
    \bigwedge^2(V) \oplus \hdots$
\end{re}

\subsection{Gradings}
By Proposition \ref{proposition:polfuncchar} we can write a polynomial functor $P$ as $P = P_0  \oplus P_1 \oplus \hdots \oplus P_d$, where $P_0$ is a constant polynomial functor, and for $e\geq 1$, $P_e$ is a subfunctor (or quotient) of some $(T^{\otimes e})^{\oplus m_e}$. This decomposition is unique (up to adding zero-spaces), and $P_e$ is called the {\em degree-$e$-part} of $P$. We can also define it without using the characterization by

$$P_e(V):=\{p \in P(V)\mid \text{ for all } t \in K, P(t\cdot \id_V)p = t^e\cdot p\}.$$

\noindent $P_0$ is also called the {\em constant part} of $P$. We call $P$ {\em pure} if $P_0$ is the zero-space, and we call $P_1 \oplus \hdots \oplus P_d$ the {\em pure part} of $P$. This is also notated as $P_{\geq 1}$. Terms like $P_{\leq e}$ or $P_{>e}$ are defined accordingly in the obvious way.

\subsection{An Order on Polynomial Functors} \label{subsection:order}

\begin{de}
We call a polynomial functor $Q$ smaller than a polynomial
functor $P$, if the two are not isomorphic, and for the largest 
$e$ such that $Q_e$ is not isomorphic to $P_e$, $Q_e$ is 
isomorphic to a quotient of $P_e$.
\end{de}

Writing these largest nonisomorphic parts $Q_e$ and $P_e$ as sums of Schur functors, i.e. 

$$Q_e = \bigoplus_{\lambda:|\lambda|=e} (S^{\lambda})^{m_{\lambda}},\,\,\, P_e = \bigoplus_{\lambda:|\lambda|=e} (S^{\lambda})^{n_{\lambda}}$$

then $Q$ is smaller than $P$ if and only if $m_{\lambda} \leq n_{\lambda}$ for all partitions $\lambda$ of $e$ (where the inequality is strict for at least one such $\lambda$). This also demonstrates that this order on polynomial functors is a well-founded order (i.e. there are no infinite strictly decreasing chains).

\subsection{Subsets}

\begin{de} \label{definition:subset}
    Let $P$ be a polynomial functor over $K$. A {\em subset} of $P$, $X \subseteq P$, consists of a subset $X(V) \subseteq P(V)$ for each $V \in \Vec$, such that for all $\phi \in \Hom(V, W)$ and $v \in X(V)$ we have $P(\phi)(v) \in X(W)$.
\end{de}

\begin{ex} \label{example:tensor2}
    Let $P=T^{\otimes d}$, $r\in\NN$ fixed. Then, $X\subseteq P$ given by

    $$X(V) = \{A \in P(V): \rk{A} \leq r\}$$

    is a subset (compare Example \ref{example:tensor}). 
\end{ex}

\begin{ex}
    Let $P$ be any polynomial functor, and $A$ any subset of $P_0$. Then

    $$X(V):=\{(a, b) \in P(V)=P_0(V)\oplus P_{\geq 1}(V)\mid a\in A\}$$

    is a subset, usually denoted by $A \times P_{\geq 1}$. We use the notation with $\times$ instead of $\oplus$ purely for aesthetic reasons. We will often consider sets of the form $A \times Q$, where $A$ is an affine variety, and $Q$ is a pure polynomial functor. These can be implicitly seen as subsets of $K^n \oplus Q$, where $n$ is big enough such that there is an embedding of $A$ into $K^n$.
\end{ex}

\begin{de}
    A subset $X\subseteq P$ is called {\em closed}, if $X(V)$ is closed (i.e. the zero-locus of a finite collection of polynomials) for every $V\in \Vec$. $X$ is called reducible, if there exist closed subsets $X_1, X_2 \subsetneq X$ such that $X = X_1 \cup X_2$, and irreducible if it is not reducible.
\end{de}

For closed subsets of polynomial functors there exists an important Noetherianity result by Draisma:

\begin{thm}[\cite{Draisma17}] \label{Theorem:Noetherianity}

Any descending chain of closed subsets of a polynomial functor

$$P \supseteq X_1 \supseteq X_2 \supseteq X_3 \supseteq \hdots$$

stabilizes, i.e. there exists $N \in \NN$ such that $X_N=X_{N+1}=X_{N+2}=\hdots$.

\end{thm}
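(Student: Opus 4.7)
I would prove this by well-founded induction on the polynomial functor $P$, using the order from subsection \ref{subsection:order}. The base case is when $P$ is a constant functor, so $P(V) = U$ for some fixed $U \in \Vec$ independent of $V$; then a closed subset of $P$ is just a single closed subvariety of $U$, and the descending chain condition is Hilbert's basis theorem applied to the coordinate ring of $U$.

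For the inductive step, assume the conclusion holds for every polynomial functor strictly smaller than $P$. Given a chain $P \supseteq X_1 \supseteq X_2 \supseteq \cdots$, I may discard initial terms and assume $X_1 \subsetneq P$. The technical heart of the argument would be a parameterization lemma: any proper closed subset $Y \subsetneq P$ is a finite union of images $\alpha_j(Q_j)$, where each $\alpha_j \colon Q_j \to P$ is a polynomial transformation and each $Q_j$ is a polynomial functor strictly smaller than $P$. Granted this lemma, I pull the chain $X_1 \supseteq X_2 \supseteq \cdots$ back along each $\alpha_j$ to obtain a descending chain of closed subsets of $Q_j$; by the inductive hypothesis each such chain stabilizes, and since the images of the $\alpha_j$ jointly cover $X_1$ (and hence every $X_i$), stabilization of all the pullbacks forces stabilization of the original chain from some index onward.

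The main obstacle is clearly the parameterization lemma, and this is where I would devote the bulk of the work. My approach would be to choose a point $v \in P(V) \setminus Y(V)$ whose top-degree component is generic in $P_d(V)$, and to use a shift construction $\mathrm{Sh}_v P$ whose value on $W$ encodes $P(V \oplus W)$ expanded around $v$. Analyzing this expansion degree by degree, stabilizing $v$ in the top degree cuts down the possible leading terms, so the top-degree piece of $\mathrm{Sh}_v P$ is a proper subquotient of $P_d$, and hence $\mathrm{Sh}_v P$ is strictly smaller than $P$ in the well-founded order. Translating back, the $\GL$-orbit closure of $v$ fills a dense part of $P$ in a functorial way, and the complement is parameterizable from $\mathrm{Sh}_v P$ via a polynomial transformation. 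Iterating on the portion of $Y$ in the complement, each step produces a strictly smaller polynomial functor, so the well-foundedness of the order terminates the iteration after finitely many steps and yields the required finite cover.
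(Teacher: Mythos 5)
The paper does not prove this theorem; it is cited from \cite{Draisma17}. So I am evaluating your proposal against Draisma's actual argument.

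Your top-level architecture — well-founded induction on the order of polynomial functors, a finite parameterization of proper closed subsets by strictly smaller polynomial functors, and pulling back the chain along each parameterizing map — is a sound reduction, and the final pull-back step is correct. The gap is entirely in the parameterization lemma, and it is a real one. The central claim, that the top-degree piece of $\mathrm{Sh}_v P$ is a proper subquotient of $P_d$ and hence $\mathrm{Sh}_v P < P$, is false. For $\Sh_U P(V) = P(U \oplus V)$, the degree-$d$ part is isomorphic to $P_d$ itself; for instance $(\Sh_U T^{\otimes d})_d(V) \cong V^{\otimes d}$. Shifting never makes a polynomial functor smaller — on the contrary, it only adds lower-degree summands. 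What actually decreases in the order is not the shift but a \emph{localization} of the shifted closed subset: only after inverting a suitable polynomial $h$ does $\Sh_U(X)[1/h]$ become isomorphic to $B \times R$ with $R < P_{\geq 1}$. This is precisely Theorem \ref{Theorem:shift}, which is a substantial theorem in its own right and the real technical heart of \cite{Draisma17}. With that in hand one still has to handle the closed complement $\{h=0\}$ separately, which is why Draisma's proof runs a double induction rather than a single one.

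There is also a circularity worry. The parameterization theorem for closed subsets in the form you want (Theorem \ref{Theorem:paramclosed} here) is, in \cite{Bik2020} and \cite{Bik21}, proved \emph{using} Noetherianity — for instance, one needs that a closed subset has finitely many irreducible components, and the iterative covering process is shown to terminate via a descending chain of closed subsets. Your proposed termination argument ("each step produces a strictly smaller polynomial functor, so well-foundedness terminates the iteration") does not apply: the part of $Y$ you iterate on lives in the complement of a principal open and so remains a closed subset of a functor with top-degree part $P_d$, not of a strictly smaller functor. To make your plan work you would need to establish the shift-and-localize result first, without parameterization, and then set up the double induction that Draisma actually uses.
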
 

This theorem implies in particular that a closed subset has a finite number of irreducible components, i.e. inclusion-wise maximal irreducible subsets.

\subsection{Polynomial Transformations} \label{section:poltrans}

We now define the functorial equivalent of a polynomial map:

\begin{de} \label{def:poltrans}
    A polynomial transformation $\alpha:Q \to P$ consists of a polynomial map $\alpha_V:Q(V) \to P(V)$ for each $V \in \Vec$, such that for all $\phi \in \Hom(V, W)$ the following diagram commutes: 
$$
    \xymatrix{
  Q(V) \ar[r]^{\alpha_V} \ar[d]^{Q(\phi)} & P(V) \ar[d]^{P(\phi)} \\
  Q(W) \ar[r]^{\alpha_W} & P(W) 
}$$
\end{de}

We will often consider polynomial transformations from sets of the form $A \times Q$, where $A$ is an affine variety and $Q$ is pure. These can simply be interpreted as restrictions of polynomial transformations as defined above.\\

Note that the image $X(V):=\im(\alpha_V)$ of any polynomial transformation is a subset.\\

\begin{ex}
    The tensor polynomials as described in the introduction are polynomial transformations, e.g. rewriting Example \ref{example:tensor} in the language of polynomial functors: $Q=T^{\oplus r\cdot d}$, $P=T^{\otimes d}$ and $\alpha$ given by
    \begin{align*}
        \alpha_V: Q(V) &\to P(V)\\
        (v_{11}, \hdots, v_{1d}, \hdots, v_{r1}, \hdots, v_{rd}) &\mapsto \sum_{i=1}^r v_{i1}\otimes\hdots\otimes v_{id}
    \end{align*}

    is a polynomial transformation, and its image is the subset from Example \ref{example:tensor2}.
\end{ex}

We make a few observations on the structure of polynomial transformations:

\begin{re}\label{remark:homtrans}
    Note that the diagram in Definition \ref{def:poltrans} in particular commutes if $\phi$ is a multiple of the identity, i.e. $ \phi = t\cdot \id$ with $t\in K$. Say $Q=Q_e$ is a homogeneous polynomial functor of degree $e$, $P=P_d$ is a homogeneous polynomial functor of degree $d$, and $\alpha:Q \to P$ is a polynomial transformation. Then, for $q \in Q(V)$:

    \[\alpha_V(Q(t\cdot \id_V)q) = \alpha_V(t^{e}q)\]

    is equal to 

    \[P(t\cdot \id_V)\alpha_V(q) = t^{d} \alpha_V(q).\]

    So, unless $\alpha$ is the zero-transformation, $e$ must divide $d$, and $\alpha_V$ is a homogeneous polynomial of degree $d/e$, if $e\neq 0$ (this needs $K$ to be an infinite field). In particular, if $d=e\neq 0$, then $\alpha$ is linear. Note that the only linear transformations from 

    \[Q = \bigoplus_{\lambda:|\lambda| = e} (S^{\lambda})^{\oplus m_{\lambda}}\,\, \text{     to     }\,\, P = \bigoplus_{\lambda:|\lambda| = e = d} (S^{\lambda})^{\oplus n_{\lambda}}\]

    are of the form 

    \[\alpha_V((q_{\lambda i})_{|\lambda|= e, 1 \leq i\leq m_{\lambda}}) = (p_{\lambda j})_{|\lambda| = d=e, 1 \leq j\leq n_{\lambda}}\]

    where 

    \[p_{\lambda j} = \sum_{i=1}^{m_{\lambda}} A_{\lambda i j} q_{\lambda i}, \, \, A_{\lambda i j} \in K.\]
    
    If $e=0$, we get that $\alpha_V(q) = t^d \alpha_V(q)$ so $d$ has to be equal to 0, but $\alpha$ need not be linear.
\end{re}

\begin{re} \label{remark: split2}
    Now let $P=P_d$ still be homogeneous, but $\alpha:B\times Q \to P$, where $Q$ is any pure polynomial functor and $B$ is an affine variety. Write $Q=Q_{<d} \oplus Q_d \oplus Q_{>d}$. Then, by a similar argument as above, we can write $\alpha$ as

    \[\alpha_V(b, q_{<d}, q_d, q_{>d}) = \alpha_{1, V}(b, q_{<d}) + \alpha_{2, V}(b, q_d)\]

    where $\alpha_2$ is of the same form as the linear transformation in the previous remark, except that the coefficients $A_{\lambda i j}$ are of the form $f_{\lambda i j}(b)$, where $f_{\lambda i j} \in K[B]$.
\end{re}

\subsection{Shifting}

\begin{de}
    Any fixed $U \in \Vec$ defines a polynomial functor $Sh_U$: $V \mapsto U \oplus V$, $\phi \mapsto \id_U \oplus \phi$. If $P$ is a polynomial functor, then $Sh_U P:= P \circ Sh_U$ is also a polynomial functor, called the {\em shift over $U$} of $P$. We also write $Sh_U X := X \circ Sh_U$ for subsets $X \subseteq P$, and, for polynomial transformations $\alpha:Q \to P$, $Sh_U \alpha := \alpha_{U\oplus V} : Sh_U Q \to Sh_U P$.
\end{de}

The concept of shifting is useful due to the following theorem: 

\begin{thm}[\cite{Bik21}, Theorem 5.1.] \label{Theorem:shift}
    Let $X \subseteq P$ a closed subset that is not of the form $\widetilde{X} \times P_d$ (where $P_d$ is the highest-degree part of $P$). Then there exist a vector space $U$ and a nonzero polynomial $h \in K[P(U)]$, such that 
    $$Sh_U(X)[1/h] = \{p \in X(U\oplus V):h(p)\neq 0\}$$

    (where $h$ is regarded as a polynomial on $P(U \oplus V)$ via the map $P(\pi_U):P(U \oplus V) \to P(U)$, where $\pi_U$ is the standard projection), is isomorphic to $B\times R$, where $B$ is an affine variety, and $R$ is a pure polynomial functor with $R< P_{\geq 1}$.
\end{thm}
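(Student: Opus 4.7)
The plan is to pick a generic smooth point $x_0$ of $X(U)$ for a sufficiently large $U$, localize so that the local structure of $X$ near $x_0$ is as simple as possible, and then use a linear change of variables over $X(U)$ to split the contribution from $U$ and the contribution from $V$ inside $Sh_U(X)$. The starting observation is that, since $X$ is closed and is not of the form $\widetilde{X} \times P_d$, the fiber of $X(V) \to P_{<d}(V)$ over some point $a \in P_{<d}(V)$ is a proper nontrivial subset of $P_d(V)$. Invoking Theorem \ref{Theorem:Noetherianity} together with functoriality, I would extract a defining equation for $X$ which is nontrivial in a $P_d(U)$-direction at a smooth point of $X(U)$ for some vector space $U$.

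Next I would enlarge $U$ if necessary so that a smooth point $x_0 \in X(U)$ exists at which this equation has a nonzero $P_d(U)$-differential, and I would let $h \in K[P(U)]$ be a product of minors and discriminants cutting out the affine open locus $B \subseteq X(U)$ of such "good" points. A point of $P(U \oplus V)$ decomposes as $x + y + c(x,y)$ with $x \in P(U)$, $y \in P(V)$, and a cross term $c$. By Remark \ref{remark: split2} applied fiberwise over $B$ to the pullback of the defining equation, the cross term $c$ depends polynomially on $x$ and is linear in the top-degree part of $y$ once $x$ is fixed. The equation then becomes, on the locus $h(x) \neq 0$, a polynomial equation which is linear in a specific Schur-isotypic component of $y_d$ with leading coefficient invertible on $B$. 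Solving for that component expresses it as a polynomial in $x \in B$ and the remaining components of $y$, yielding an isomorphism $Sh_U(X)[1/h] \cong B \times R$, where $R$ is obtained from $P_{\geq 1}$ by replacing the eliminated Schur summand of $P_d$ by a proper sub/quotient.

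Since $R_d$ is then a proper sub/quotient of $P_d$ while the lower-degree parts of $R$ are unchanged, the description of the order on polynomial functors in Section \ref{subsection:order} immediately gives $R < P_{\geq 1}$, which is the required conclusion.

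The main obstacle is converting the abstract failure of the product structure into a single equation that is honestly linear in one Schur summand of $y_d$ with a nowhere-vanishing coefficient on $B$. This requires (i) choosing $U$ large enough that the equation produced in the first paragraph has a nontrivial $P_d(U)$-derivative at some smooth point, (ii) using the equivariant structure of the cross terms to isolate one Schur isotypic component of $y_d$, and (iii) ensuring that the resulting linear equation does not degenerate to zero after the change of variables. Item (ii) is handled by Remark \ref{remark: split2}, but (i) and (iii) rely crucially on the freedom to enlarge $U$; indeed, the central technical point behind the theorem is that any failure of the product structure eventually gives rise to such a nondegenerate linear equation over a suitable $B$.
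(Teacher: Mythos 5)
The paper gives no proof of this theorem; it is cited from \cite{Bik21}, Theorem 5.1. Your sketch captures the high-level structure of that proof (shift over a suitable $U$, localize to a dense open $B$, and eliminate one Schur-isotypic summand of $P_d$), but the central step has a genuine gap.

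Your appeal to Remark~\ref{remark: split2} is misplaced: that remark decomposes a polynomial \emph{transformation} $\alpha\colon B\times Q\to P_d$ into a homogeneous target of degree $d\geq 1$, and says nothing useful about polynomial \emph{functions} on $P(U\oplus V)$ (transformations into the degree-zero functor $K$), which is what a defining equation is. Relatedly, writing a point of $P(U\oplus V)$ as $x+y+c(x,y)$ is misleading: the cross-term coordinates are independent variables of $P(U\oplus V)$, not functions of $x$ and $y$. More fundamentally, the claim that after inverting $h$ the defining equation becomes \emph{linear} in a Schur-isotypic component of $y_d$ with invertible leading coefficient on $B$ is not justified: an equation on $P(U)$ that is nontrivial in a $P_d(U)$-direction may well have degree $k>1$ in the $P_d$-variables, and passing to a smooth locus or to a dense open does nothing to lower that degree. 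The actual proof relies on a degree-reduction device: one starts from a relation of minimal degree $k$ in the top-degree Schur variables, and exploits the $\GL$-module structure of the ideal of $Sh_U X$ together with the shift to produce, after inverting a suitable $h$, a relation of strictly smaller degree in those variables, eventually reaching degree one. That mechanism is the technical heart of the shift theorem, and your sketch buries it inside ``the freedom to enlarge $U$'' without supplying the argument.
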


This theorem will allow us to use induction on the order of polynomial functors, by identifying big subsets with subsets in smaller polynomial functors.

\section{Constructible and Semialgebraic Subsets of Polynomial Functors} \label{section:constsemi}

\subsection{Definition}

We can now introduce the main objects of this paper:

\begin{de} \label{definition:constructible}
    Let $P$ be a polynomial functor over $\CC$. A subset $X \subseteq P$ is called
    \begin{enumerate}
        \item {\em pre-constructible}, if $X(V)$ is constructible for every $V \in \Vec$
        \item {\em constructible}, if it is pre-constructible, and there exists $U \in \Vec$, such that for all $V \in \Vec$:
        \begin{align} \label{equation:pullback}
            X(V) = \{v \in P(V)| \forall \phi \in \Hom(V, U), P(\phi)(v) \in X(U)\}
        \end{align}
        We say that $X$ is determined by $U$.
    \end{enumerate}

    Replacing $\CC$ by $\RR$ and the word ``constructible" by the word ``semialgebraic" yields a definition for a (pre-)semialgebraic subset.
\end{de}

\begin{re}
    It is straightforward to check that if $X$ is determined by $U$, then it is also determined by any other vector space of dimension at least $\dim(U)$, in particular also by $K^n$ for $n\geq \dim(U)$.
\end{re}

Equation \eqref{equation:pullback} is a finiteness condition that makes
sure that all the information of $X(V)$, even if $V$ is very big, is 
already contained in $X(U)$. Note that it is natural to ask for a finiteness
condition when using the word ``constructible" (or ``semialgebraic"), since
also the classical notion of a constructible set refers to a finite union 
of locally closed sets.\\

Also note that the inclusion ``$\subseteq$" of equation 
$\eqref{equation:pullback}$ is true for all subsets. Hence, in order to 
check whether a pre-constructible subset $X\subseteq P$ is actually
constructible, it suffices to show that for all $v \in P(V) \setminus X(V)$
 there exists $\phi \in \Hom(V, U)$, such that $P(\phi)(v) \notin X(U)$.

\subsection{Examples}

In the following, we give some examples of constructible subsets over $\CC$. They are also semialgebraic subsets, if you replace the ground field by $\RR$.

\begin{ex}  \label{example:closed}
    If $X$ is a closed subset of $P$, i.e. it is a subset and $X(V)$ is Zariski-closed for every $V$, then Theorem \ref{Theorem:Noetherianity} implies that $X$ is a constructible subset. For example:
    \begin{enumerate}
        \item $P = T^{\otimes 2}$ and $X(V) = \{A \in P(V): \rk(A) \leq r\}$, i.e. matrices of rank at most some integer $r$.
        \item $P = T^{\otimes d}$ and $X(V) = \{A \in P(V): \text{slicerank}(A) \leq r\}$, i.e. tensors of slice rank at most some integer $r$ (see \cite{Tao16}).
        \item $P = T^{\otimes 3}$ and $X(V) = \{A \in P(V): \text{geometric rank}(A) \leq r\}$, i.e. tensors of geometric rank at most some integer $r$ (see \cite[Lemma 5.3.]{Kopparty20}).
    \end{enumerate}
\end{ex}

\begin{ex} \label{example:span}
    Let $P=T^{\oplus d+1}$ (where $d$ is fixed) and
    $$X(V) = \{(v_0, v_1, \hdots, v_d) \in P(V) : v_0 \in \Span(v_1, \hdots, v_d)\}$$
    This is a constructible subset determined by $\CC^1$: Let $(v_0, \hdots, v_d) \in P(V)\setminus X(V)$, i.e. $v_0 \notin \Span(v_1, \hdots, v_d)$. Then we can find a linear map $\phi:V \to \CC^1$, such that $v_1, \hdots, v_d$ are in the kernel of $\phi$, but not $v_0$.
\end{ex}

\begin{ex} \label{example:matroid}
    Let $P = T^{\oplus d}$ and $([d]=\{1, 2, \hdots, d\}, \II)$ be a matroid (see e.g. \cite{Oxley06}). Let
    $$\widetilde{X}_{\II}(V):=\{(v_1, \hdots, v_d) \in P(V) : \forall I \in 2^{[d]}, (v_j)_{j \in I} \text{ linearly independent} \Leftrightarrow I \in \II\}$$
    $$X_{\II}(V) := \bigcup_{g \in \End(V)} P(g)(\widetilde{X}(V)).$$

    An interesting example is $d=3$, $\II = \{I \in 2^{[d]}: |I| \leq 2\}$. It turns out that

    $$X_{\II} = \widetilde{X}_{\II} \cup \widetilde{X}_{\{\{1\}, \{2\}, \{3\}, \emptyset\}}\cup \widetilde{X}_{\{\{1\}, \{2\},  \emptyset\}}\cup \widetilde{X}_{\{\{1\},  \{3\}, \emptyset\}}\cup \widetilde{X}_{\{ \{2\}, \{3\}, \emptyset\}}\cup \widetilde{X}_{\{ \emptyset\}}$$

    (in particular, it does not include the sets $\widetilde{X}_{\{\{1\}, \emptyset\}}$ or $\widetilde{X}_{\{\{1, 2\},\{1\}, \{2\},  \emptyset\}}$). \\

    Each such $X_{\II}$ is a constructible subset determined by $\CC^{d}$, since for $(v_1, \hdots, v_d) \in P(V)\setminus X_{\II}(V)$, there exists a linear map $\phi:V\to \CC^d$, such that $\phi|_{\Span(v_1, \hdots, v_d)}$ is injective, so all linear independencies (and, trivially, all linear dependencies) in $(v_1, \hdots, v_d)$ are preserved, and hence $P(\phi)(v_1, \hdots, v_d) \in P(\CC^d)\setminus X_{\II}(\CC^d)$.
\end{ex}

The following Example also makes sense when replacing the number 3 by any other positive integer $d$, but we use the number 3 for ease of notation.
It is also an illustration of how our theory of single-variable polynomial functors could be generalized to multivariable polynomial functors, which we allow multiple linear maps to act on.

% The following Example and Proposition also make sense when replacing the number 3 by any other positive integer $d$, but we use the number 3 for ease of notation.

\begin{ex} \label{example:subrank}
     Let $P = T^{\otimes 3}$, $q \in \NN$ fixed, and 
    
    $$X(V):=\{A \in P(V): \subrk(A) \leq q\}$$
    
    where the subrank of $A$, $\subrk(A)$, is the biggest integer $q$, such that there exist linear maps $\phi_1, \phi_2, \phi_3: V \to \CC^q$ with 

    $$(\phi_1 \otimes \phi_2 \otimes \phi_3)A = e_1^{\otimes 3} + \hdots + e_q^{\otimes 3}.$$

%    It follows immediately from the next easy Proposition (with
%     $V=\CC^{q+1}$) that $X$ is a constructible subset.
\begin{itemize}
    
    \item We claim that $X$ is a constructible subset of $P$. It is clear
    that $X$ is a subset, and by quantifier elimination that every $X(V)$
    is constructible, so $X$ is pre-constructible.

    \item We claim that $X$ is determined by $\CC^{3(q+1)}$: Let $A \in P(V) \setminus X(V)$. Then there exist $\phi_1, \phi_2, \phi_3: V \to \CC^{q+1}$ with $(\phi_1 \otimes \phi_2 \otimes \phi_3)A = e_1^{\otimes 3} + \hdots + e_{q+1}^{\otimes 3}.$ 
    
    \item Let

    $$\Phi:=\phi_1 \oplus \phi_2 \oplus \phi_3 : V \to \CC^{3(q+1)}$$

    \noindent Then $P(\Phi)A$ has subrank at least $q+1$, i.e. it does not lie in $X(\CC^{3(q+1)})$, because

    $$(\pi_1 \otimes \pi_2 \otimes \pi_3)P(\Phi)A =  e_1^{\otimes 3} + \hdots + e_{q+1}^{\otimes 3}$$

    \noindent where

    $$\pi_i: \CC^{3(q+1)} \to \CC^{q+1}, \, (a_1, a_2, a_3) \mapsto a_i.$$

    \end{itemize}
\end{ex}

% \begin{prop}
%     Let $P = T^{\otimes 3}$, and $X \subseteq P$ a pre-constructible 
%     subset. If there exists a vector space $U$ such that $A\in 
%     P(V)\setminus X(V)$ if and only if there exist $\phi_1, \phi_2, \phi_3 \in \Hom(V, U)$ 
%     with $(\phi_1 \otimes \phi_2 \otimes \phi_3)A \in P(U)\setminus X(U)$,
%     then $X$ is constructible and determined by $U^{\oplus 3}$.
% \end{prop}

% \begin{proof}
%     Let $A\in P(V)\setminus X(V)$, and $\phi_1, \phi_2, \phi_3$ as in the Proposition. Set 
%     $$\Phi := \phi_1 \oplus \phi_2 \oplus \phi_3 : V \to U^{\oplus 3}$$

%     Then $P(\Phi)A \in P(U^{\oplus 3}) \setminus X(U^{\oplus 3})$, because
%     $$(\pi_1 \otimes \pi_2 \otimes \pi_3)P(\Phi)T =  (\phi_1 \otimes \phi_2 \otimes \phi_3)A \in P(U)\setminus X(U)$$

%     where

%     $$\pi_i: U^{\oplus 3} \to U, \, (a_1, a_2, a_3) \mapsto a_i.$$

%     \textcolor{red}{\textbf{does this make sense?}}
% \end{proof}

% \begin{ex}
%     Let $P = T^{\otimes 3}$, $r \in \NN$ fixed, and 
    
%     $$X(V):=\{A \in P(V): \GR(A) \leq r\}$$
    
%     where the geometric rank of $A$, $\GR(A)$, is defined as in \cite{Kopparty20}: Choosing coordinates $V=\CC^n$, and $M_1, \hdots, M_n$ are the slices of $A \in P(\CC^n)$ (it is proven in \cite{Kopparty20} (Theorem 3.2.) that it does not matter in which direction $A$ is sliced up), the geometric rank of $A$ is the codimension of the variety

%     $$\{(x, y) \in \CC^{n+n}: x^{\intercal}M_1y = \hdots = x^{\intercal}M_ny = 0\}$$

%     \textcolor{red}{\textbf{TODO: prove this with the Proposition}}
% \end{ex}

We can easily construct complicated constructible subsets, for example like this:

\begin{ex} \label{example:artif}
    Let $P = \CC \times Q$, where $Q$ is any pure polynomial functor, $X^{(0)}, X^{(1)}, \hdots, X^{(n)}$ constructible subsets of $Q$. Then

    $$X = ((\CC \setminus \{1, \hdots, n\}) \times X^{(0)}) \cup (\{1\} \times X^{(1)}) \cup \hdots \cup (\{n\} \times X^{(n)})$$

    is a constructible subset.
\end{ex}

The following is an example of a pre-constructible subset that is not constructible:

\begin{ex}
    Let $P(V) = \CC \times V^{\otimes 2}$ and

    $$X(V) = ((\CC\setminus \ZZ_{\geq 0})\times V^{\otimes 2}) \cup \bigcup_{m\in\ZZ_{\geq 0}} \{m\} \times \{A \in V^{\otimes 2} \mid rk(A) \leq m\} $$

    Note that for every $n \in \NN$

    \begin{align*}
   X(\CC^n) =& ((\CC\setminus  \ZZ_{\geq 0})\times \CC^{n\times n}) \cup \bigcup_{m \geq n}  \{m\} \times \underbrace{\{A \in \CC^{n\times n} \mid rk(A) \leq m\}}_{=\CC^{n\times n}}  \cup \\
   & \bigcup_{m=0}^{n-1} \{m\} \times \{A \in \CC^{n\times n} \mid rk(A) \leq m\}\\
    =&((\CC\setminus \{0, \hdots, n-1\})\times \CC^{n\times n}) \cup \bigcup_{m=0}^{n-1} \{m\} \times \{A \in \CC^{n\times n} \mid rk(A) \leq m\} 
    \end{align*} 

    is constructible. But for every $n \in \ZZ_{\geq 0}$, the set  

    $$\{A \in P(V): \forall \phi \in \Hom(V, \CC^n), P(\phi)(A) \in X(\CC^n)\}$$

    is equal to 

    $$((\CC\setminus \{0, \hdots, n-1\})\times V^{\otimes 2}) \cup \bigcup_{m=0}^{n-1} \{m\} \times \{A \in V^{\otimes 2} \mid rk(A) \leq m\} $$

    which is not the same as $X(V)$ if $\dim(V) > n$.
\end{ex}

Finally, an example of a semialgebraic set, with no equivalent in the complex world:

\begin{ex} \label{example:psd}
    $P=S^2$ (i.e. symmetric matrices), and $X(V)$ are the positive semi-definite elements in $P(V)$. This is a semialgebraic subset determined by $\RR^1$, since for $A \in P(V) \setminus X(V)$, there exists $v \in V^{\ast}$ such that $vAv^{\top} = P(v)A < 0$, i.e. $P(v)A \notin X(\RR^1)$.
\end{ex}

For the following example, we do not know whether it is semialgebraic:

\begin{que}
    For $P=S^{2d}$, is the subset $X$ given by elements that can be written as sums of squares semialgebraic?
\end{que}

\subsection{Elementary Properties}

We will later need the following easy Proposition. Also here, the word constructible can be replaced by the word semialgebraic (which would implicitly change the field from $\CC$ to $\RR$).

\begin{prop}\label{proposition:properties}

If $X$ and $Y$ are constructible subsets of a polynomial functor $P$, and $\alpha:Q \to P$ is a polynomial transformation then

\begin{enumerate}[label=(\roman*)] \label{proposition:properties}
    \item \label{item:intersection} The intersection $(X \cap Y)(V):=X(V) \cap Y(V)$ is a constructible subset.
    \item \label{item:union} The union $(X \cup Y)(V):=X(V) \cup Y(V)$ is a constructible subset.
    \item \label{item:preimage} The preimage $\alpha^{-1}(P)(V):=\alpha^{-1}(P(V)) \subseteq Q(V)$ is a constructible subset.
\end{enumerate}

\end{prop}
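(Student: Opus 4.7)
My plan is to handle each of the three items separately, first verifying pre-constructibility (which comes from classical Chevalley/Tarski--Seidenberg applied pointwise at each $V$), and then identifying an explicit $U$ that determines the new subset. By the remark following Definition \ref{definition:constructible}, I may replace $U_X$ and $U_Y$ by their direct sum and thereby assume throughout that $X$ and $Y$ are determined by one common vector space $U$.

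For part \ref{item:intersection}, pre-constructibility of $(X\cap Y)(V) = X(V)\cap Y(V)$ is immediate, since the intersection of two (semi)constructible subsets of $P(V)$ is of the same type. I claim $X\cap Y$ is determined by the same $U$. The inclusion ``$\subseteq$'' is automatic. For ``$\supseteq$'', if every $\phi\in \Hom(V,U)$ satisfies $P(\phi)(v)\in (X\cap Y)(U) = X(U)\cap Y(U)$, then separately $P(\phi)(v)\in X(U)$ for all $\phi$ and $P(\phi)(v)\in Y(U)$ for all $\phi$, whence $v\in X(V)$ and $v\in Y(V)$ since both are determined by $U$.

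For part \ref{item:union}, again pre-constructibility is classical and pointwise. The subtle point is that $X\cup Y$ is generally \emph{not} determined by $U$, because a single $v\notin X(V)\cup Y(V)$ may require two different test maps $\phi_1,\phi_2:V\to U$ to witness $v\notin X(V)$ and $v\notin Y(V)$ respectively. I will show instead that $X\cup Y$ is determined by $U\oplus U$. Given $v\in P(V)\setminus (X\cup Y)(V)$, pick $\phi_1$ with $P(\phi_1)(v)\notin X(U)$ and $\phi_2$ with $P(\phi_2)(v)\notin Y(U)$, and set $\phi := \phi_1\oplus \phi_2: V\to U\oplus U$. If $P(\phi)(v)$ lay in $X(U\oplus U)$, then applying $P$ to the projection $\pi_1:U\oplus U\to U$ and using that $X$ is a subset (hence closed under $P$-functoriality) would give $P(\phi_1)(v) = P(\pi_1)P(\phi)(v)\in X(U)$, contradicting the choice of $\phi_1$; the same argument with $\pi_2$ rules out $Y(U\oplus U)$. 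This is the one place where honest care is needed; I expect it to be the main conceptual obstacle, since for general (pre-)constructible $X,Y$ the simplest candidate $U$ does not work.

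For part \ref{item:preimage} (reading the right-hand side as $\alpha^{-1}(X)(V) := \alpha_V^{-1}(X(V))$, since $\alpha^{-1}(P) = Q$ would be trivial), pre-constructibility follows because $\alpha_V$ is a polynomial map and the preimage of a (semi)constructible set under a polynomial map is of the same type. I claim $\alpha^{-1}(X)$ is determined by $U$. Let $q\in Q(V)$ with $Q(\phi)(q)\in \alpha^{-1}(X)(U)$ for every $\phi\in \Hom(V,U)$; by commutativity of the diagram in Definition \ref{def:poltrans},
\[
P(\phi)(\alpha_V(q)) \;=\; \alpha_U(Q(\phi)(q)) \;\in\; X(U)
\]
for every such $\phi$. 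Since $X$ is determined by $U$, this forces $\alpha_V(q)\in X(V)$, i.e.\ $q\in \alpha^{-1}(X)(V)$, as required. Combining the three parts completes the proof, and the identical argument goes through verbatim in the semialgebraic setting over $\RR$.
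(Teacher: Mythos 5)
Your proof is correct and takes essentially the same approach as the paper for part (ii), which is the only part the paper proves in detail (the paper declares (i) and (iii) ``completely straightforward''): you use the same $\phi_1\oplus\phi_2$ trick with projections, the only cosmetic difference being that you first normalize to a common determining space $U$ rather than working with $U_1\oplus U_2$ directly. Your explicit write-ups of (i) and (iii) are also correct, and you rightly flag and repair the typo $\alpha^{-1}(P)$ (which should read $\alpha^{-1}(X)$) in the statement of (iii).
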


\begin{proof}
Statements \ref{item:intersection} and \ref{item:preimage} are completely straightforward, so we will only prove \ref{item:union}: It is clear that $X \cup Y$ is pre-constructible. To prove that it is constructible, let $U_1$ and $U_2$ be the vector spaces that $X$ resp. $Y$ are determined by. We claim that $X \cup Y$ is determined by $U_1 \oplus U_2$.\\

Let $v \in P(V) \setminus (X \cup Y)(V)$. Then there exist $\phi_1: V \to U_1$, $\phi_2: V \to U_2$, such that $P(\phi_1)(v) \notin X(U_1)$ and $P(\phi_2)(v) \notin Y(U_2)$. Then $P(\phi_1 \oplus \phi_2)(v) \notin (X \cup Y)(U_1 \oplus U_2)$, because otherwise, denoting by $\pi_{U_1}$ and $\pi_{U_2}$ the corresponding projections from $U_1\oplus U_2$ onto $U_1$ and $U_2$, $P(\pi_{U_1})P(\phi_1 \oplus \phi_2)(v)=P(\phi_1)(v) \in X(U_1)$ and $P(\pi_{U_2})P(\phi_1 \oplus \phi_2)(v) = P(\phi_2)(v) \in Y(U_2)$. 
\end{proof}

\section{Parameterisation of Constructible Subsets} \label{section:parameterisation}

\subsection{Statement}

The goal of this section is to prove the following theorem that will be an important ingredient for our main Theorem \ref{Theorem:chevalley} but is also interesting in its own right:

\begin{thm}[Parameterisation of Constructible Subsets]
\label{Theorem:paramconst}
Let $P$ be a polynomial functor over $\CC$ and $X \subseteq P$ a constructible subset. Then there exist finitely many polynomial transformations

$$\alpha^{(i)}:A^{(i)} \times Q^{(i)} \to P$$

where $A^{(i)}$ are irreducible affine varieties, and $Q^{(i)}$ pure polynomial functors, such that 

$$X = \bigcup_i \im(\alpha^{(i)}).$$

\end{thm}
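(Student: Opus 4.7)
The plan is to combine Noetherian induction on the closure $\overline{X}$ (using Theorem \ref{Theorem:Noetherianity}) with an outer induction on the polynomial functor $P$ in the well-founded order of Section \ref{subsection:order}. The base case of the outer induction is when $P$ is a constant functor: $X$ then corresponds to a classical constructible subset of a fixed vector space, which decomposes into finitely many irreducible locally closed pieces, each parametrized by a morphism from an irreducible affine variety (taking $Q^{(i)} = 0$).

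For the inductive step, I would first reduce to the case where $Z := \overline{X}$ is irreducible: by Noetherianity, $\overline{X}$ has finitely many irreducible components $Z_1, \ldots, Z_k$, and Proposition \ref{proposition:properties} allows writing $X = \bigcup_i (X \cap Z_i)$; when there are at least two components, each $X \cap Z_i$ has closure strictly smaller than $\overline{X}$, so the Noetherian induction applies. Now apply Theorem \ref{Theorem:shift} to the irreducible $Z$: either $Z = \widetilde{Z} \times P_d$ for the top-degree summand $P_d$, a case I would handle by recursing on $\widetilde{Z}$ living in the strictly smaller polynomial functor $P_{<d}$; or there exist $U$ and nonzero $h \in K[P(U)]$ with $\Sh_U(Z)[1/h] \cong B \times R$ and $R < P_{\geq 1}$.

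In the shift case, $\Sh_U(X)[1/h]$ becomes a constructible subset of $B \times R$, which, after embedding $B \hookrightarrow K^m$, sits inside the strictly smaller polynomial functor $K^m \oplus R$. The outer inductive hypothesis yields a parametrization $\Sh_U(X)[1/h] = \bigcup_i \im(\beta^{(i)})$ by polynomial transformations $\beta^{(i)}: A^{(i)} \times Q^{(i)} \to K^m \oplus R$. I would then push these forward to $P$ by composing with the inclusion $K^m \oplus R \hookrightarrow \Sh_U(P)$ and the natural transformation $\Sh_U(P) \to P$ given by $V \mapsto P(\pi_V)$, where $\pi_V: U \oplus V \to V$ is the projection. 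This natural transformation maps $\Sh_U(X)(V) = X(U \oplus V)$ onto $X(V)$, because $X$ is a subset and every element of $X(V)$ is the image under $P(\pi_V)$ of its natural lift along the inclusion $V \hookrightarrow U \oplus V$. The residual part of $X$ (what was captured by $\{h=0\}$ in the shifted picture) lives in a closed sublocus of $P$ whose closure is strictly smaller than $Z$, and the Noetherian induction finishes the argument.

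The main obstacle is combining the two inductions consistently: the residual $\{h=0\}$ piece produced by shifting must be identified, back in $P$, with a constructible subset whose closure is genuinely strictly smaller than $Z$, so that Noetherian induction really applies. In the $Z = \widetilde{Z} \times P_d$ case, one must also deal with the fact that the fibers of $X$ over $\widetilde{Z}$ may vary with the base point, and this cannot be handled by simply projecting $X$ to $\widetilde{Z}$ and recursing --- doing so would presuppose that the image of a constructible subset under a polynomial transformation is constructible, which is precisely the Chevalley-type result (Theorem \ref{Theorem:chevalley}) that the parametrization theorem is meant to enable.
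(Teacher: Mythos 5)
Your proposal correctly identifies the right high-level structure (Noetherian induction on $\overline{X}$ plus induction on $P$ in the well-founded order, reduction to irreducible closure) but has a genuine gap at exactly the point you flag yourself, and the paper resolves it by an ingredient absent from your plan.

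The case where $\overline{X}$ contains a full copy of the top degree piece is where the work is. You set this up as $\overline{X} = \widetilde{X} \times P_d$ and say you would ``recurse on $\widetilde{X}$,'' then correctly observe at the end that this is circular: $X$ is not a product, its fibres over $\widetilde{X}$ vary, and describing that variation is tantamount to the Chevalley statement being built. The paper's actual case split is on whether $\overline{X}$ is of the form $A \times P_{\geq 1}$ (all of the pure part, not just the top graded piece). If it is, the crucial input is Theorem~\ref{Theorem:dense}: for $V$ large, a dense open set of points $p \in P_{\geq 1}(V)$ have the property that $\phi \mapsto P_{\geq 1}(\phi)(p)$ surjects onto $P_{\geq 1}(U)$, where $U$ is a vector space determining $X$. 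From this one deduces that the set $\Omega = \{b \in A : \{b\} \times P_{\geq 1}(U) \subseteq X(U)\}$ is dense in $A$, hence contains an open dense $B$, and then $B \times P_{\geq 1} \subseteq X$ by the determinacy of $X$ by $U$. The $B$-piece is parametrised by identity maps on $B_i \times P_{\geq 1}$ for an affine decomposition $B = \bigcup B_i$; the complement $((A \setminus B) \times P_{\geq 1}) \cap X$ is handled by Noetherian induction on $A$. Without Theorem~\ref{Theorem:dense} (which is also the one place where algebraic closedness enters, and is why the theorem fails for semialgebraic subsets) your proposal has no way to conclude that $X$ contains a full open subfamily of $A \times P_{\geq 1}$.

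In the complementary case the paper does not shift and push forward. Instead it invokes Theorem~\ref{Theorem:paramclosed} to cover the closed set $\overline{X}$ by images of maps $\beta^{(j)} : R_j \times Q_j \to P$ with $Q_j < P_{\geq 1}$, forms the constructible preimages $(\beta^{(j)})^{-1}(X)$ (Proposition~\ref{proposition:properties}), parametrises each of those by the outer induction, and composes. Your alternative --- parametrise $\Sh_U(X)[1/h]$ inside $B \times R$ and then push forward along $P(\pi_V) : \Sh_U P \to P$ --- has a further unaddressed issue: the natural lift of $p \in X(V)$ along $V \hookrightarrow U \oplus V$ lands in the locus where $h \circ P(\pi_U)$ vanishes (since $\pi_U \circ \iota_V = 0$), so it does \emph{not} lie in $\Sh_U(X)[1/h]$, and your claim that the pushforward surjects onto $X(V)$ minus a strictly smaller closed residual needs a more careful argument (roughly the $Z_1/Z_2$ analysis the paper carries out later, in the proof of Theorem~\ref{Theorem:chevalley}, not here). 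Routing through Theorem~\ref{Theorem:paramclosed} and preimages avoids this entirely.
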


This theorem reduces the proof of our version of Chevalley's Theorem to showing that images of polynomial transformations on sets of the form $A\times Q$ as above are constructible. Note that we certainly need to allow $A$ to be an affine variety, and not a full affine space, because otherwise this theorem would imply that all constructible (and in particular all closed) sets, in the classical sense, are parameterizable by polynomials, which is well-known to be wrong.

\begin{re}
    The theorem is wrong for semialgebraic subsets. Let $P=S^2$ the symmetric matrices, and $X$ its positive semidefinite elements (as in Example \ref{example:psd}). Note that for every $V\in\Vec$, $X(V)$ has the same dimension as $S^2(V)$, namely $\binom{\dim(V)+1}{2}$, i.e. it is quadratic in $\dim(V)$. But if it was possible to cover $X$ by images of polynomial transformations, then by the classification of polynomial transformations, it would have to be covered by images of transformations of the form 

    $$\alpha^{(i)}:A^{(i)} \times T^{\oplus d} \to P.$$

    However, such a union of images can only have dimension linear in $\dim(V)$, which is a contradiction.

\end{re}

\subsection{Examples}

\begin{ex}
    $X$ as in Example \ref{example:span} is the image of the polynomial transformation
    \begin{align*}
    \CC^d \times V^{\oplus d} &\to V^{\oplus d+1} \\
    (a_1, \hdots, a_d, v_1, \hdots, v_d) &\mapsto (a_1v_1 + \hdots + a_dv_d, v_1, \hdots, v_d)
    \end{align*}
\end{ex}

\begin{ex} \label{example:quadforms}
    Let $P(V)=S^2(V) \oplus S^2(V)$ (where $S^2(V)$ is thought of as degree-2-homogeneous-polynomials), and

    $$X(V)=\{(f,g) \in P(V): \forall a \in V^{\ast},\,\,\, f(a)=0 \Rightarrow g(a)=0\}$$

    This is a constructible subset determined by $\CC^1$, because for $(f,g) \in P(V) \setminus X(V)$, there exists $a \in V^{\ast}$ such that $g(a) \neq 0$ and $f(a) = 0$, and hence $P(a)(f,g) \in P(\CC^1) \setminus X(\CC^1)$. It is also the union of the images of the following polynomial transformations:

    \begin{align*}
    \CC \times S^2 & \to S^2 \oplus S^2 &  S^1 \oplus S^1 & \to S^2 \oplus S^2\\
    (a, q) &\mapsto (q, a\cdot q) &  (l,m) &\mapsto (l^2, lm)
\end{align*} 
\end{ex}

\begin{ex}
    If $X$ is a closed subset (see Example \ref{example:closed}), then the following previously-known theorem says that it can be parameterized:
\end{ex}

\begin{thm} \label{Theorem:paramclosed}
    Let $P$ be a polynomial functor, and $X\subseteq P$ a closed subset that is not of the form $A \times P_{\geq 1}$ for some affine variety $A$. Then there exist finitely many polynomial transformations $\alpha^{(j)}: C_j \times Q_j \to P$ (with $C_j$ irreducible and closed, $Q_j < P_{\geq 1}$) such that ${X} = \bigcup_j \im(\alpha^{(j)})$.
\end{thm}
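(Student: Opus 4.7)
The plan is to argue by well-founded induction on the order of polynomial functors from Subsection~\ref{subsection:order}, using the shift theorem (Theorem~\ref{Theorem:shift}) as the main reduction and Theorem~\ref{Theorem:Noetherianity} for a secondary Noetherian induction on closed subsets. The base case $P_{\geq 1}=0$ is vacuous, since every closed subset of a constant polynomial functor is of the excluded form $A\times P_{\geq 1}$ (with $A=X$). So fix $P$ with $P_{\geq 1}\neq 0$, assume the theorem for all polynomial functors $P'<P$, and let $X\subseteq P$ be closed with $X\neq A\times P_{\geq 1}$.

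First I would dispose of the case $X=\widetilde X\times P_d$, where $P_d$ is the top-degree part of $P$. Here $\widetilde X\subseteq P_{<d}$ is closed, and the hypothesis on $X$ forces $\widetilde X\neq A\times (P_{<d})_{\geq 1}$ (otherwise $X$ would equal $A\times P_{\geq 1}$). Since $P_{<d}<P$, the induction hypothesis gives transformations $\beta^{(j)}:C_j\times\widetilde Q_j\to P_{<d}$ with $\widetilde Q_j<(P_{<d})_{\geq 1}$ and $\bigcup_j\im(\beta^{(j)})=\widetilde X$. Setting $Q_j:=\widetilde Q_j\oplus P_d$ and $\alpha^{(j)}(c,q,p):=(\beta^{(j)}(c,q),p)$ from $C_j\times Q_j\to P_{<d}\oplus P_d=P$ parameterises $X$; the Schur-multiplicity description of the order makes it immediate that $\widetilde Q_j<(P_{<d})_{\geq 1}$ together with $P_{\geq 1}=(P_{<d})_{\geq 1}\oplus P_d$ implies $Q_j<P_{\geq 1}$.

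In the remaining case Theorem~\ref{Theorem:shift} applies, providing a vector space $U$ and a nonzero $h\in K[P(U)]$ with $\Sh_U(X)[1/h]\cong B\times R$ for an affine variety $B$ and a pure polynomial functor $R<P_{\geq 1}$. The isomorphism amounts to a polynomial transformation $\gamma:B\times R\to \Sh_U P$ whose image exhausts $\Sh_U X$ outside $\{h=0\}$. I would then ``unshift'' $\gamma$ by composing with the projection $P(\pi):P(U\oplus V)\to P(V)$ coming from $\pi:U\oplus V\twoheadrightarrow V$, and smearing over a finite family of linear automorphisms of $U\oplus V$ so that the pulled-back function $h$ is generically nonzero on each $X(V)$. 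This produces finitely many polynomial transformations $B'_k\times R\to P$ whose union of images covers $X$ outside a proper closed subset $X'\subsetneq X$. Applying the Noetherian induction to $X'$ (absorbing any pieces of the excluded form $A\times P_{\geq 1}$, which are trivially parameterised by the identity) completes the argument.

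The main obstacle is precisely this descent from $\Sh_U X$ to $X$ in the last case: the shift theorem describes $X$ at $U\oplus V$, whereas we need a parameterisation with image in $P(V)$ itself. Making the smearing rigorous—and in particular verifying that the residual locus is genuinely a \emph{proper} closed subset of $X$ so that the Noetherian induction terminates in finitely many steps—is where the real work of the proof lies, and the point at which one must exploit the specific form of the isomorphism delivered by Theorem~\ref{Theorem:shift} rather than only its formal statement.
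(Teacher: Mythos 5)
The paper itself does not prove Theorem~\ref{Theorem:paramclosed}; the ``proof'' there is a citation to \cite{Bik2020} (Theorem~4.2.5) and \cite{Bik21} (Proposition~5.6). So you are not missing the paper's argument—you are reproving a result that is treated as a black box. Your overall architecture (well-founded induction on the order of polynomial functors, shift theorem for the non-product case, Noetherian induction on the residual locus) is in the right family of ideas, and the first two steps—the vacuous base case and the reduction of $X=\widetilde X\times P_d$ to $P_{<d}<P$ with $Q_j:=\widetilde Q_j\oplus P_d$—are correctly worked out.

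The gap you flag in the descent step is real, but there is also a concrete \emph{error} hiding in your parenthetical handling of the residual locus. You write that pieces of the excluded form $A'\times P_{\geq 1}$ appearing in the Noetherian induction can be ``absorbed'' because they are ``trivially parameterised by the identity.'' That identity map is $\mathrm{id}:A'\times P_{\geq 1}\to P$, whose source has pure part $Q=P_{\geq 1}$, which does \emph{not} satisfy the conclusion's requirement $Q_j<P_{\geq 1}$. And the requirement has teeth: if $Q<P_{\geq 1}$ then $\dim Q(V)<\dim P_{\geq 1}(V)$ for $\dim V\gg 0$ (the highest degree of disagreement has strictly smaller Schur multiplicities, and in higher degrees they agree), so any finite union of images of maps $C_j\times Q_j\to P$ has dimension eventually strictly less than $\dim A'+\dim P_{\geq 1}(V)$; such a union cannot equal $A'\times P_{\geq 1}$. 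So you cannot sweep this case under the rug—you must instead \emph{prove} that the residual locus arising from the shift theorem never has an irreducible component of the form $A'\times P_{\geq 1}$. This, together with the smearing argument you only sketch (a finite family of automorphisms will not do; one really needs the failure locus $\{p\in X(V)\mid h(P(\phi)p)=0\text{ for all }\phi\colon V\to U\}$ to be a strictly smaller closed subset in the sense of Theorem~\ref{Theorem:Noetherianity}), is where the substance of the proof lies. One more remark: as stated, the theorem is almost certainly meant for \emph{irreducible} closed $X$ (that is how the paper actually uses it, after first splitting into irreducible components in the proof of Theorem~\ref{Theorem:paramconst}), and without irreducibility the statement is in fact false (take $P=K^2\oplus T$ and $X=(K\times\{0\}\times T)\cup(K^2\times\{0\})$, which is not of the form $A\times P_{\geq 1}$ but has an irreducible component of that form). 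Your proof does not impose irreducibility either, and this is another place where the argument quietly needs it.
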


\begin{proof}
    See either Theorem 4.2.5. in \cite{Bik2020} or, for a more precise statement but in the language of GL-Varieties, Proposition 5.6. in \cite{Bik21}.
\end{proof}

In fact, the proof of Theorem \ref{Theorem:paramconst} relies heavily on this theorem.

\subsection{Proof}

The proof of Theorem \ref{Theorem:paramconst} needs one more result from \cite{Bik21b} (this is also the part that requires the ground field to be algebraically closed):

\begin{thm} \label{Theorem:dense}
    Let $P$ be a pure polynomial functor over $\CC$ and $U\in \Vec$. Then there exists $V\in\Vec$ and a dense open subset $\Sigma \subseteq P(V)$, such that for every $p\in \Sigma$ the map

       \begin{align*}
           \Hom(V, U) &\to P(U)\\
           \phi &\mapsto P(\phi)(p)
       \end{align*}

       is surjective.
\end{thm}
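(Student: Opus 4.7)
\textbf{Plan and reduction to $P = T^{\otimes d}$.} The plan is to construct an explicit witness $p_0$ for which the map $\Phi_{p_0} : \phi \mapsto P(\phi)(p_0)$ is already surjective, and then to promote this single point to a Zariski-dense open neighborhood via the implicit function theorem together with compactness. First I would reduce to the case $P = T^{\otimes d}$. A direct sum $P = P^{(1)} \oplus P^{(2)}$ is handled by taking $V = V^{(1)} \oplus V^{(2)}$ and $p = (P^{(1)}(\iota_1)p_1,\, P^{(2)}(\iota_2)p_2)$: since $\phi : V \to U$ decomposes as $(\phi|_{V^{(1)}}, \phi|_{V^{(2)}})$ with components that can be chosen independently, and since $P^{(i)}(\phi)(P^{(i)}(\iota_i)p_i) = P^{(i)}(\phi|_{V^{(i)}})(p_i)$ by functoriality, joint surjectivity follows from surjectivity of the individual $\Phi_{p_i}$. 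In characteristic zero, each Schur functor $S^\lambda$ is a direct summand of $T^{\otimes|\lambda|}$ (cut out by a Young symmetrizer $c_\lambda$), and $c_\lambda$ commutes with every $\phi^{\otimes d}$, so a witness $p_0 \in V^{\otimes d}$ for $T^{\otimes d}$ yields a witness $c_\lambda p_0 \in S^\lambda(V)$ for $S^\lambda$. Hence it suffices to treat $P = T^{\otimes d}$.

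\textbf{Explicit witness for $T^{\otimes d}$.} Let $m = \dim U$ with basis $e_1, \dots, e_m$, and let $V$ have basis $\{v^{(I)}_k : I \in [m]^d,\ k \in [d]\}$. Set
\[
p_0 \;=\; \sum_{I = (i_1, \ldots, i_d) \in [m]^d} v^{(I)}_1 \otimes v^{(I)}_2 \otimes \cdots \otimes v^{(I)}_d \;\in\; V^{\otimes d}.
\]
Given $y = \sum_I y_I e_I \in U^{\otimes d}$, pick $c_I \in \CC$ with $c_I^d = y_I$ (possible since $\CC$ is algebraically closed) and define $\phi_y(v^{(I)}_k) = c_I e_{i_k}$. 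Then $\phi_y^{\otimes d}(p_0) = \sum_I c_I^d\, e_I = y$, so $\Phi_{p_0}$ is surjective. A short calculation shows that at such $\phi_y$ with all $c_I \neq 0$, the differential $d\Phi_{p_0}|_{\phi_y}$ is itself surjective onto $U^{\otimes d}$: restricting to directions $\psi(v^{(I)}_k) = b_I e_{i_k}$ produces $d \sum_I c_I^{d-1} b_I\, e_I$, which ranges over all of $U^{\otimes d}$ as the $b_I$ vary.

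\textbf{Density.} To promote $p_0$ to a dense open $\Sigma$, the analytic implicit function theorem applied to the map $(p,\phi) \mapsto (p, \Phi_p(\phi))$ at each $(p_0, \phi_y)$ yields analytic open neighborhoods $W_y \times T_y$ of $(p_0, y)$ in $P(V) \times P(U)$ such that, for every $p \in W_y$, the image $\Phi_p(\Hom(V,U))$ contains $T_y$. Using the degree-$d$ homogeneity $\Phi_p(t\phi) = t^d \Phi_p(\phi)$ to pass to the compact projective space $\mathbb{P}(U^{\otimes d})$, finitely many $T_{y_i}$ cover the open locus of $y$ with all coordinates nonzero; repeating with several $\GL(U)$-translated bases of $U$ covers the remaining coordinate hyperplanes. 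Then $W := \bigcap_i W_{y_i}$, a finite intersection, is an analytic open neighborhood of $p_0$ on which every $\Phi_p$ is surjective. Since a nonempty analytic open subset of the irreducible complex affine space $P(V)$ is Zariski-dense, and since $\Sigma := \{p : \Phi_p \text{ is surjective}\}$ is constructible by the classical Chevalley theorem, $\Sigma$ contains a Zariski-dense Zariski-open subset of $P(V)$.

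\textbf{Main obstacle.} The hardest step will be the compactness argument: assembling the pointwise IFT neighborhoods into a single open set of $p$'s on which $\Phi_p$ is \emph{simultaneously} surjective onto all of $P(U)$. Naïvely, IFT only gives surjectivity onto the generic locus of $P(U)$, so one has to argue—via the homogeneity, the $\GL(U)$-action on the target, and the compactness of projective space—that finitely many covering charts suffice and that they assemble to a $p$-open set rather than one depending on $y$.
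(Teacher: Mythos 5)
The paper's own proof is a one-liner: it invokes Corollary~2.5.4 of the cited work of Bik--Draisma--Eggermont--Snowden, which provides finitely many polynomial transformations $\alpha_1,\dots,\alpha_k$ such that for any $p \in P(V)$ outside $\bigcup_i \overline{\im\alpha_{i,V}}$ the map $\phi \mapsto P(\phi)(p)$ is already surjective onto $P(U)$; one then takes $V$ large enough that this complement is dense. Your approach --- build an explicit witness, linearize, and spread to a Zariski-open by IFT plus projective compactness --- is genuinely different and more self-contained, and the skeleton (nonempty Euclidean interior of the constructible surjectivity locus $\Rightarrow$ Zariski-dense open, since $P(V)$ is irreducible) is sound. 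But there is a real gap exactly where you flag the main obstacle, and I do not think the fix you sketch closes it.

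The compactness step is not right as written. You say ``finitely many $T_{y_i}$ cover the open locus of $y$ with all coordinates nonzero''; but that locus, even after projectivizing, is a \emph{noncompact} open dense subset of $\mathbb{P}(U^{\otimes d})$, so there is no finite subcover. The intended fix --- $\GL(U)$-translating the basis to absorb the coordinate hyperplanes --- fails for genuinely degenerate $y$: if $y = e_1 \otimes e_2 - e_2 \otimes e_1$, then the $(1,1)$-coordinate of $y$ is $(e_1^\ast\otimes e_1^\ast)(y)=0$ in \emph{every} basis of $U$, so no choice of basis places $y$ in a chart where your explicit $\phi_y$ (which requires all $c_I \neq 0$) has surjective differential. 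Since your reduction sends everything to $P = T^{\otimes d}$, such $y$ are unavoidable, and the $\PP(T_y)$ you construct do not form an open cover of $\PP(U^{\otimes d})$. What would rescue the argument is to show that the fiber $\Phi_{p_0}^{-1}(y)$ meets the locus of surjective differential for \emph{every} $y$ (which appears to be true for your $p_0$ by ad hoc constructions, but is not what your explicit $\phi_y$ gives), or to carry out the Schur-functor reduction more carefully and lift $y \in S^\lambda(U)$ to a preimage $y'\in U^{\otimes d}$ under the Young symmetrizer having all coordinates nonzero --- a claim that looks plausible but is nowhere argued.

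A secondary gap: the reduction to $T^{\otimes d}$ via direct sums and Schur functors is only verified for the single witness $p_0$, not for the \emph{dense open} $\Sigma$. For $P=P^{(1)}\oplus P^{(2)}$ of unequal degrees the map $\Phi_p(t\phi)$ is no longer homogeneous, so the ordinary projective-space compactification does not apply and one would need a weighted variant; and a nearby $p$ loses the split structure $(P^{(1)}(\iota_1)p_1,P^{(2)}(\iota_2)p_2)$ that made the two components independently controllable. The Schur-functor step does pass through cleanly for the open set ($c_\lambda$ is an open linear surjection, so $c_\lambda(W)$ is dense open in $S^\lambda(V)$ and every point has a preimage in $W$), but you do not say this, and the direct-sum step is left genuinely open.
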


\begin{proof}
    The theorem follows directly from Corollary 2.5.4. in 
    \cite{Bik21b} with $V$ big enough, such that 
    $\Sigma:=P(V)\setminus (\bigcup_{i=1}^k \overline{\im \alpha_{i, V}})$ is dense (this is possible by a simple 
    dimensionality argument).
\end{proof}

\begin{proof}[Proof of Theorem \ref{Theorem:paramconst}] \,
    \begin{itemize}
        \item Write $\overline{X} = X^{(1)} \cup \hdots \cup X^{(n)}$ where $X^{(i)}$ are the closed irreducible components of $\overline{X}$. To prove that $X$ is parameterisable it suffices to prove that $X^{(i)} \cap X$ (which is again a constructible subset by \ref{proposition:properties}.\ref{item:intersection}) is parameterisable for every $i$. Hence, we can assume without loss of generality that $\overline{X}$ is irreducible.

        \item If $\overline{X}$ is not of the form $A \times P_{\geq 1}$ for some affine variety $A$, then by Theorem \ref{Theorem:paramclosed} there exist finitely many polynomial transformations $\beta^{(j)}: R_j \times Q_j \to P$ (with $R_j$ closed and irreducible, $Q_j < P_{\geq 1}$) such that $\overline{X} = \bigcup_j \im(\beta^{(j)})$.

        \item By Proposition \ref{proposition:properties}.\ref{item:preimage}, $(\beta^{(j)})^{-1}(X)$ are constructible subsets. By induction on the order of polynomial functors each of them can be covered by finitely many maps $\gamma^{(ji)}$, and hence $X$ is the union of the images of $\beta^{(j)} \circ \gamma^{(ji)}$.

        \item So assume that $\overline{X}=A\times P_{\geq 1}$, for some affine variety $A$. Note that $A$ is irreducible, since $\overline{X}$ is irreducible. Our next goal is to find a dense open subset $B \subseteq A$ such that $B \times P_{\geq 1} \subseteq X$.

        \item Consider the set

       $$\Omega:= \{b \in A : \{b\} \times P_{\geq 1}(U) \subseteq X(U)\}$$

       \noindent (where $U$ is the vector space that $X$ is determined by). $\Omega$ is constructible by quantifier elimination. We want to show that $\Omega$ is dense in $A$, so we can take $B$ as an appropriate subset of $\Omega$.

       \item By Theorem \ref{Theorem:dense} there exists a vector space $V$ and a dense open subset $\Sigma \subseteq P_{\geq 1}(V)$, such that for every $p\in \Sigma$ the map

       \begin{align*}
           \Hom(V, U) &\to P_{\geq 1}(U)\\
           \phi &\mapsto P_{\geq 1}(\phi)(p)
       \end{align*}

       \noindent is surjective. So in particular, if for some $p\in \Sigma$ and $b\in A$, $(b, p)$ lies in $X(V)$, then $b$ lies in $\Omega$.

       \item So $X(V) \subseteq (\Omega \times P_{\geq 1} (V)) \cup ((A\setminus \Omega)\times (P_{\geq 1}(V)\setminus \Sigma))$. But since we assumed that $\overline{X}=A\times P_{\geq 1}$ (so in particular $\overline{X(V)}=A\times P_{\geq 1}(V)$), $\Omega$ must be dense in $A$.

       \item Hence there exists a subset $B\subseteq \Omega$ that is open (and dense) in $A$, and so $B\times P_{\geq 1} \subseteq X$. Since $B$ is quasi-affine it can be written as a finite union of irreducible affine varieties, say $B_i$. $B \times P_{\geq 1}$ can be covered with the images of identity maps on $B_i \times P_{\geq 1}$, and $((A\setminus B) \times P_{\geq 1}) \cap X$ can be covered by induction using noetherianity of $A$.

    \end{itemize}
\end{proof}

\section{Chevalley's and Weak Tarski-Seidenberg's Theorems} \label{section:chevalley}

\subsection{Statement}

We now finally set out to prove our functorial version of Chevalley's Theorem, and a weaker version of Tarski-Seidenberg's Theorem:

\begin{thm} \label{Theorem:chevalley} \,
    \begin{enumerate}[label = (\roman*)]
    \item Let $P$, $Q$ be polynomial functors over $\CC$, $Y \subseteq Q$ a constructible subset, and $\alpha:Q \to P$ a polynomial transformation. Then, $X:=\alpha(Y)\subseteq P$ is a constructible subset.
    
    \item Let $P$, $Q$ be polynomial functors over $\RR$, $Y \subseteq Q$ a closed subset, and $\alpha:Q \to P$ a polynomial transformation. Then, $X:=\alpha(Y)\subseteq P$ is a semialgebraic subset.

    \end{enumerate}
\end{thm}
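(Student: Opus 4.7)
The plan is to reduce both parts to the following Key Lemma: for every polynomial transformation $\beta: A \times Q \to P$ with $A$ an irreducible affine variety and $Q$ a pure polynomial functor, the image $\im(\beta)$ is a constructible subset of $P$ (respectively semialgebraic in the real case). For part (i), Theorem \ref{Theorem:paramconst} writes $Y = \bigcup_i \im(\alpha^{(i)})$ with $\alpha^{(i)}: A^{(i)} \times Q^{(i)} \to Q$, and then $\alpha(Y) = \bigcup_i \im(\alpha \circ \alpha^{(i)})$, a finite union of such images, which is constructible by Proposition \ref{proposition:properties}.\ref{item:union}. For part (ii), since $Y$ is assumed closed I would use Theorem \ref{Theorem:paramclosed} instead (handling the exceptional case $Y = A \times Q_{\geq 1}$ by its tautological identity parameterisation), producing the same reduction; this is precisely why the real statement restricts to closed $Y$, as no parameterisation of general semialgebraic subsets is available.

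To prove the Key Lemma I would induct on the well-founded order of polynomial functors from Section \ref{subsection:order}, applied to the target $P$, with the constant case immediate. Each $\im(\beta_V)$ is classically constructible or semialgebraic, so the real content is producing a single vector space $U$ that determines $\im(\beta)$. I would start from the irreducible closed subset $\overline{\im(\beta)}$, which is constructible by Example \ref{example:closed} and hence determined by some $U_0$. If $\overline{\im(\beta)}$ is not of the form $B \times P_{\geq 1}$, Theorem \ref{Theorem:shift} supplies $U_1$ and $h \in K[P(U_1)]$ with $Sh_{U_1}(\overline{\im(\beta)})[1/h] \cong B \times R$ for some $R < P_{\geq 1}$; on the open locus $\{h \ne 0\}$ the image is captured by a polynomial transformation into a strictly smaller target and is handled by the inductive hypothesis, while the closed locus $\{h = 0\} \cap \overline{\im(\beta)}$ is a proper closed subfunctor treated by Noetherianity (Theorem \ref{Theorem:Noetherianity}). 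If instead $\overline{\im(\beta)} = B \times P_{\geq 1}$, I would mimic the strategy from the proof of Theorem \ref{Theorem:paramconst}: Theorem \ref{Theorem:dense} yields a dense open $B' \subseteq B$ with $B' \times P_{\geq 1} \subseteq \im(\beta)$, and the complement is handled by Noetherian induction on $B$.

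The hard part will be the shift case: after localising the target at $h$, I need to convert the inductive conclusion on the smaller functor $R$ into one single finite-dimensional $U$ that determines $\im(\beta)$ globally, assembled from $U_0$, $U_1$, and the $U$ produced inductively. Concretely, for any $p \in P(V) \setminus \im(\beta_V)$ I must exhibit $\phi: V \to U$ with $P(\phi)(p) \notin \im(\beta_U)$ by chaining the non-membership certificates at each level (closure, shift, inductive smaller functor). A smaller but structural subtlety in part (ii) is that only Theorem \ref{Theorem:paramclosed}, not an analogue for semialgebraic subsets, is available; this is harmless for the initial reduction because $Y$ is already closed, but one has to verify that the inductive hypothesis is only ever invoked for images of polynomial transformations, which is automatic in this scheme.
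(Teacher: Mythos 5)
Your initial reduction (to $Y = A \times Q$ via Theorems \ref{Theorem:paramconst} and \ref{Theorem:paramclosed}, with Proposition \ref{proposition:properties}.\ref{item:union} handling the finite union) matches the paper exactly, and you correctly identify that the crux is chaining non-membership certificates into a single determining $U$. But the inductive scheme you sketch has three related gaps.

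First, your case division does not match the hypothesis of Theorem \ref{Theorem:shift}: that theorem excludes $\overline{X}$ of the form $\widetilde{X} \times P_d$ with $P_d$ the \emph{top-degree} part, not $\widetilde{X} \times P_{\geq 1}$. So the complementary case you must handle is $\overline{X} = \widetilde{X} \times P_d$, which is strictly more general than $B \times P_{\geq 1}$. Second, for that case your plan to re-run the $\Omega$/Theorem \ref{Theorem:dense} argument from the proof of Theorem \ref{Theorem:paramconst} is circular: the set $\Omega = \{b : \{b\} \times P_{\geq 1}(U) \subseteq X(U)\}$ certifies membership in $X$ only once you already know $X$ is determined by $U$, which is exactly what you are trying to prove for $X = \im(\beta)$. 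The paper instead uses Lemma \ref{lemma:innerindstep}, a structural statement about the transformation $\beta$ itself (via Remark \ref{remark: split2}): it produces a dense open $A' \subseteq A$ on which $\im(\beta)$ literally splits off a free $P_d$-factor, with no constructibility assumed, and it works over $\RR$, whereas Theorem \ref{Theorem:dense} needs $\CC$. Third, a single induction on the target $P$ is insufficient. Both the closed locus (the paper's $Z_1$, which is what you call $\{h=0\}$ but which must be phrased as $h(P(\phi)p)=0$ for all $\phi : V \to U_1$) and the complement $(A\setminus A') \times Q$ in the $\widetilde{X} \times P_d$ case map into the \emph{same} target $P$; they are disposed of by shrinking the source, either $A' \subsetneq A$ or, after re-parameterising $\alpha^{-1}(Z_1)$ via Theorem \ref{Theorem:paramclosed}, $Q' < Q$. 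This is why the paper runs a double induction, outer on $P$ and inner on $(Q,A)$. Appealing to ``Noetherianity'' is not a substitute: Theorem \ref{Theorem:Noetherianity} gives stabilisation of a descending chain of closed subsets of a fixed functor, not a well-founded order on sources.
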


The statement reduces to the case $Y=A \times Q$ by Theorem \ref{Theorem:paramconst} for the complex case and by Theorem \ref{Theorem:paramclosed} for the real case. We conjecture that statement (i) remains true when taking $Y$ as semialgebraic, and not just closed, but our methods are insufficient to prove this.\\

We stress again, that statement (ii) is also true over any other field of characteristic 0, in the sense that $X$ is determined by a particular vector space $U$, since the proof of this part does not use any particular properties of $\RR$.

\subsection{Proof}

\begin{re}
     The proof of the second point of the theorem requires that Theorems \ref{Theorem:shift} and \ref{Theorem:paramclosed} hold not only over $\CC$, but also over $\RR$ (not just as schemes but as $\RR$-points). Even though the given sources do not explicitly state that this is the case, it is clear from the proofs that it is indeed the case.
\end{re}

The proof uses similar methods as the proof of Theorem \ref{Theorem:Noetherianity} and also consists of a double induction. We will need the following lemma as a sort of base case:

\begin{lem} \label{lemma:innerindstep}
    Let $\alpha: A \times Q \to P=P_0 \oplus \hdots \oplus P_d$ (with $Q$ a pure
    polynomial functor over $\RR$ or $\CC$, $A$ affine
    irreducible, $P_d$ not the zero-functor) a polynomial transformation,
    such that for $X:=\im(\alpha)$ we have that $\overline{X}$ is of the form $\widetilde{X} \times P_d$ ($\widetilde{X} \subseteq P_{\leq d-1}$). Then there exists an
    open dense subset $A'$ of $A$, such that $\alpha(A'\times Q)$
    is of the form $X' \times P_d$ (with $X' \subseteq P_{\leq d-1}$). 
\end{lem}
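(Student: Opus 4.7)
The plan is to split $\alpha$ by output degree and isolate the part of the map that is linear in the top-degree input $q_d$. By Remark \ref{remark: split2} applied to each component $\alpha^{(e)}\colon A\times Q\to P_e$, decomposing $Q=Q_{<e}\oplus Q_e\oplus Q_{>e}$ gives
$$\alpha^{(e)}_V(b,q_{<e},q_e,q_{>e})=\alpha^{(e)}_{1,V}(b,q_{<e})+\alpha^{(e)}_{2,V}(b,q_e),$$
with $\alpha^{(e)}_{2,V}$ linear in $q_e$. Crucially, $\alpha^{(e)}$ does not see $q_{>e}$; so for $e<d$ it depends only on $b$ and the parts of $q$ of degree $<d$, not on $q_d$ or $q_{>d}$. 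Writing $Q_d=\bigoplus_\lambda(S^\lambda)^{m_\lambda}$ and $P_d=\bigoplus_\lambda(S^\lambda)^{n_\lambda}$, the map $\alpha^{(d)}_{2,V}$ is block diagonal in the Schur decomposition, with blocks $F^{(\lambda)}(b)\otimes\mathrm{id}_{S^\lambda V}$ for certain $n_\lambda\times m_\lambda$ matrices $F^{(\lambda)}(b)$ with entries in $K[A]$.

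I then define $A'\subseteq A$ to be the open locus where every $F^{(\lambda)}(b)$ (for $\lambda\vdash d$ with $n_\lambda>0$) has full row rank $n_\lambda$, equivalently where $\alpha^{(d)}_{2,V}(b,\cdot)\colon Q_d(V)\to P_d(V)$ is surjective for all $V$. The key step, which I expect to be the main obstacle, is to prove that $A'$ is nonempty (hence dense, by irreducibility of $A$). I argue by contradiction and dimension counting. If $A'=\emptyset$, then since $A$ is irreducible and there are only finitely many $\lambda$, some single $\lambda_0$ satisfies $\mathrm{rank}\,F^{(\lambda_0)}(b)\leq n_{\lambda_0}-1$ for every $b\in A$. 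Projecting $\alpha^{(d)}$ onto the $(S^{\lambda_0})^{n_{\lambda_0}}$-summand of $P_d$, its image at $V$ is contained in a union over $b\in A$ of translates of $\mathrm{im}(F^{(\lambda_0)}(b))\otimes S^{\lambda_0}V$ by the varieties $\alpha^{(d)}_{1,V}(b,Q_{<d}(V))$, giving
$$\dim(\text{image})\ \leq\ \dim A+\dim Q_{<d}(V)+(n_{\lambda_0}-1)\dim S^{\lambda_0}V.$$
On the other hand, $\overline X=\widetilde X\times P_d$ forces this image to be Zariski-dense in $(S^{\lambda_0}V)^{n_{\lambda_0}}$, so it must have dimension $n_{\lambda_0}\dim S^{\lambda_0}V$. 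But $\dim S^{\lambda_0}V$ is a polynomial of degree $d$ in $\dim V$ while $\dim Q_{<d}(V)$ has degree strictly less than $d$ and $\dim A$ is constant, so the bound fails once $\dim V$ is large enough — contradiction.

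To finish, for any $b\in A'$ the map $q_d\mapsto \alpha^{(d)}_V(b,q_{<d},q_d,q_{>d})$ is surjective onto $P_d(V)$, while by the first paragraph the lower-degree components $\alpha^{(e)}_V$ ($e<d$) are independent of $q_d$ and $q_{>d}$. So $\alpha(A'\times Q)(V)$ is invariant under replacing its $P_d$-coordinate by an arbitrary element of $P_d(V)$, hence has the product form $X'(V)\times P_d(V)$ where $X'$ is the projection of $\alpha(A'\times Q)$ onto $P_{\leq d-1}$. The whole argument pivots on the Schur-dimension estimate in the middle step; everything else is a formal consequence of the grading and Remark \ref{remark: split2}.
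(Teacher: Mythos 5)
Your proof is correct and follows essentially the same route as the paper's: both decompose $\alpha$ by output degree using Remark \ref{remark: split2}, identify the top-degree part $\alpha^{(d)}_{2}$ as a family of block-diagonal linear maps with Schur-blocks indexed by $\lambda$, define $A'$ as the locus of full rank, and finish by observing that over $A'$ the $P_d$-coordinate can be varied freely. The only cosmetic difference is in establishing that $A'$ is nonempty: the paper first shows $\alpha_2$ is dominant (so each matrix $(f_{\lambda ij}(a))_{ij}$ is surjective somewhere), while you argue by contradiction, using irreducibility of $A$ to isolate a single offending $\lambda_0$ and then contradict density via the same degree-$d$ vs.\ degree-$<d$ dimension comparison. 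Both arguments hinge on exactly the same estimate, and yours is perhaps slightly more explicit about why a single $\lambda_0$ can be singled out.
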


\begin{ex}
    Let $\alpha: A\times S^1 \oplus S^2 \to B\times S^1 \oplus S^2$ be a polynomial transformation. By \cite[Proposition 1.3.25]{Bik2020}, $\alpha$ is of the form

    \begin{align*}
    \alpha_V: (a, v, M) &\mapsto (f_1(a), f_2(a)v, f_3(a)v^2 + f_4(a)M) 
    \end{align*}

    (where $f_1:A \to B$ is a morphism, $f_2, f_3, f_4 \in K[A]$). Assume that $A$ is irreducible, and $\overline{\im(\alpha)}$ is of the form $\widetilde{X} \times S^2$. This implies that $f_4$ is not the zero-polynomial, since the degree-2-part of the image has to be of dimension quadratic in $\dim V$, and the image of $f_3(a)v^2$ only has dimension linear in $\dim V$. Set $A':=A\setminus\mathcal{V}(f_4)$. Then, 
    $$\alpha(A'\times S^1 \oplus S^2) = ((f(A' \cap \mathcal{V}(f_2)) \times \{0\}) \cup (f(A' \setminus \mathcal{V}(f_2)) \times S^1)) \times S^2$$

    which is of the required form.
\end{ex}

\begin{proof}[Proof of Lemma \ref{lemma:innerindstep}]\,
\begin{itemize}
    \item Let $\pi: P \to P_d$ be the standard projection (this is a linear, and in particular polynomial, transformation), and consider $\pi\circ\alpha$. By the conditions in the Lemma, this map is dominant.

    \item Write $Q=Q_{<d} \oplus Q_d \oplus Q_{>d}$, and accordingly write elements of $A \times Q(V)$ as $(a, q_{<d}, q_d, q_{>d})$. Then, by Remark \ref{remark: split2} we can write

    \[\pi_V\circ\alpha_V(a, q_{<d}, q_d, q_{>d}) = \alpha_{1, V}(a, q_{<d}) + \alpha_{2, V}(a, q_d)\]

    \noindent We claim that $\alpha_2$ has to be dominant: Indeed, the image of $\alpha_{1, V}$ has dimension of order $O(\dim(V)^{d-1})$, and if $\alpha_2$ were not dominant, its image would have codimension of order $O(\dim(V)^d)$.

    \item To further investigate what $\alpha_2$ looks like, write 

    $$Q_d = \bigoplus_{\lambda:|\lambda|=d} (S^{\lambda})^{\oplus m_{\lambda}},\,\,\, P_d = \bigoplus_{\lambda:|\lambda|=d} (S^{\lambda})^{\oplus n_{\lambda}}$$

    \noindent and

    $$\alpha_{2, V}(a, (q_{\lambda i})_{|\lambda|=d, 1 \leq i \leq m_{\lambda}}) = (p_{\lambda j})_{|\lambda|=d, 1 \leq j \leq n_{\lambda}}.$$

    \noindent So, $\alpha_2$ is given by polynomials $f_{\lambda i j} \in K[A]$ by

    \[p_{\lambda j} = \sum_{i=1}^{m_{\lambda}} f_{\lambda i j}(a)q_{\lambda i} \]

    \item Since $\alpha_2$ is dominant, for every partition $\lambda$, the matrix $(f_{\lambda i j}(a))_{ij}$ must be dominant (or, equivalently, surjective) for at least one $a \in A$. This implies that $m_{\lambda}\geq n_{\lambda}$, and that the variety

    \[B_{\lambda}:=\{a \in A : (f_{\lambda i j}(a))_{ij} \text{ has not full rank}\}\]

    \noindent is a proper closed subvariety of $A$.

    \item Set $A':= A \setminus (\bigcup_{\lambda:|\lambda|=d} B_{\lambda}).$ This is open by definition, and using irreducibility of $A$, we conclude that it is dense. We claim that $\alpha(A'\times Q)$
    is of the form $X' \times P_d$. Indeed, if $\alpha_V(a, q_{<d}, q_d, q_{>d}) = (p_{<d}, p_d)$ (with $a\in A'$) is in the image, then, since by construction $\alpha_{2,V}(a, \cdot)$ is surjective, we can modify $q_d$ to reach any other point of the form $(p_{<d}, p_d')$ with $p_d' \in P_d(V)$.
    
    \end{itemize}
\end{proof}

\begin{proof}[Proof of Theorem \ref{Theorem:chevalley}]

Recall that we want prove that for $\alpha:Q \to P$ and $Y\subseteq Q$ constructible/closed, $\alpha(Y)=:X$ is constructible/semialgebraic. As usual, the word constructible in the proof can be replaced by the word semialgebraic (and the name Chevalley by the names Tarski-Seidenberg) for a proof of the second point of the theorem, except for the steps where the two cases are explicitly treated differently.

\begin{itemize}

        \item It is clear that $X$ is a subset of $P$, and, by classical Chevalley's Theorem, that $X(V)$ is constructible for every $V$. So we just have to show that $X$ is determined by some vector space.
    
        \item By Theorem \ref{Theorem:paramconst} in the complex case, and Theorem $\ref{Theorem:paramclosed}$ in the real case, $Y$ can be written as
        \[Y=\bigcup_i \alpha^{(i)}(A^{(i)}\times R^{(i)})\]
        
        where $\alpha^{(i)}:A^{(i)}\times R^{(i)} \to Q$ are finitely many polynomial transformations, $A^{(i)}$ are irreducible affine varieties, and $R^{(i)}$ are pure polynomial functors. So $X$ is the union of the images of $\alpha \circ \alpha^{(i)}$, and since by Proposition \ref{proposition:properties}.\ref{item:union} finite unions of constructible subsets are again constructible subsets, it is enough to prove the theorem when $Y$ is of the form $Y = A \times Q$ (with $A$ affine-irreducible and $Q$ pure).

        \item The proof consists of a double induction: There is an outer induction hypothesis that assumes that all images of transformations $\alpha':A'\times Q' \to P'$ are constructible, where $A', Q'$ are arbitrary and $P'<P$. The inner induction hypothesis assumes that all images of maps $\alpha':A'\times Q' \to P$ are constructible, where either $Q'<Q$, or $Q' \cong Q$ and $A' \subsetneq A$ (but with fixed codomain $P$).

        \item If $X$ happens to be a subset of $P_0 \oplus \{(0, \hdots, 0)\}$, then it is constructible by classical Chevalley's Theorem.

        \item If $\overline{X}$ is of the form $\widetilde{X} \times P_d$ as in the previous Lemma \ref{lemma:innerindstep}, then we can use the lemma to conclude that there exists an open, dense $A' \subseteq A$, such that $\alpha(A' \times Q)$ is of the form $X' \times P_d$. Now, $X' \times P_d$ is constructible if and only if $X'$ is, but by Remark \ref{remark: split2}, $X'$ can be identified with the image of $\alpha$ restricted to $A' \times Q_{\leq d-1}$, so it is constructible (using either the inner or the outer induction hypothesis). And $\alpha((A \setminus A') \times Q)$ is constructible by the inner induction hypothesis, hence $X$ is constructible as the union of two constructible subsets.

        \item If $\overline{X}$ is of neither of the two above forms, then by the Shift Theorem (Theorem \ref{Theorem:shift}), there exist a vector space $U$, and a nonzero polynomial $h \in K[\overline{X(U)}]$, such that $\Sh_U(\overline{X})[1/h]$ is isomorphic to some $B\times P'$ via an isomorphism $\beta$, where $B$ is an affine variety, and $P'$ is a pure polynomial functor with $P' < P_{\geq 1}$.

        \item We take a step back and discuss the strategy for the rest of the proof: Let $p\in P(V)\setminus X(V)$. We need to show that there exists a vector space $W$, independent of $p$ and $V$, such that there exists a linear map $\phi:V \to W$ with $P(\phi)(p) \notin X(W)$. If $p \notin \overline{X}(V)$ then we already know from Example \ref{example:closed} that there does exist such a vector space, say $W'$. If $p \in \overline{X}(V)$, then we consider two cases, namely

        \[p \in Z_1(V):= \{p \in \overline{X}(V)|\text{for all }\phi:V\to U,\, h(P(\phi)p)=0\}\]

        \noindent and

        \[p \in Z_2(V):= \overline{X}(V)\setminus Z_1(V).\]

        \noindent The first case can be dealt with by the inner induction (again using unirationality), and for the second case we will use the Shift Theorem \ref{Theorem:shift} and the outer induction, even though this will need a little more care, as $Z_2$ is typically not even a subset of $P$.

        \item We quickly do the first case: Note that $X \cap Z_1$ is the image of $\alpha$ restricted to $Y':=\alpha^{-1}(Z_1)$ which is a closed proper subset of $Y=A \times Q$. Then either $Y'=A'\times Q$ where $A' \subsetneq A$ and we can use the induction hypothesis directly to see that $X \cap Z_1 = \alpha(A'\times Q)$ is constructible. Or, $Y'$ is not of this form, but then by Theorem \ref{Theorem:paramclosed}, $Y'$ is the union of finitely many images of maps $\alpha'^{(i)}:A'^{(i)}\times R'^{(i)} \to A \times Q$ with $R'^{(i)}<Q$, so $X \cap Z_1$, which is the union of all images $\alpha \circ \alpha'^{(i)}$, is constructible by the inner induction hypothesis. So there exists $W_1 \in \Vec$, such that for all $p\in Z_1(V)\setminus X(V)$, there exists $\phi:V\to W_1$ with $P(\phi)p \notin X(W_1)$.

        % \item For the second case, consider first $Z_2'(V):=\{q \in \overline{X(U \oplus 
        % V)}|h(P(\pi_V)q)\neq 0\}  \subseteq \Sh_U(P)$ (where $\pi_V:U\oplus V \to U$ is the standard projection). This is a constructible subset, since it is the intersection of the closed subset
        % $\Sh_U(\overline{X})$ and $
        % \{q \in P(U \oplus V)|h(P(\pi_V)q)\neq 0\}$ (which is determined
        % by the zero-space). Consider $Y'(V):=\alpha_{U\oplus V}^{-1}
        % (Z_2'(V)) $. By Proposition 
        % \ref{proposition:properties}.\ref{item:preimage} applied 
        % to $\Sh_U\alpha$, this is a constructible subset of 
        % $\Sh_U(Q)$. By the outer induction hypothesis, $\beta_V \circ \alpha_{U\oplus V}(Y'(V)) = \beta_V(Z_2'(V) \cap X(U \oplus V))$ is a 
        % constructible subset of $B \times P'$ (since $P' < P_{\geq 1}$).

        \item For the second case, consider first
        
        \[Z_2'(V):=\{p \in \overline{X(U \oplus 
        V)}|h(P(\pi_V)p)\neq 0\}  \subseteq \Sh_U(P)(V)\] 
        
        \noindent (where $\pi_V:U\oplus V \to U$ is the standard projection). Note that $h$ only contains variables from the degree-0-part of $\Sh_U P$. So 
        $Y'(V):=\alpha_{U \oplus V}^{-1}(Z_2'(V)) \subseteq \Sh_U Q(V)$ is of the form $A'\times Q''$, where $Q''$ is the pure part of $\Sh_U Q$ and $A'$ is an affine subvariety of $A \times Q(U)$. So we can use the outer induction hypothesis to conclude that $\beta_V \circ \alpha_{U\oplus V}(Y'(V)) = \beta_V(Z_2'(V) \cap X(U \oplus V))$ is a 
        constructible subset of $B \times P'$ (since $P' < P_{\geq 1}$).

        \item So we get that there exists a vector space $W_2$, such that for all $p \in Z_2'(V)\setminus X(U \oplus V)$, there exists a linear map $\phi: V \to W_2$ such that $P(\id \oplus \phi)p \in Z_2'(W_2)\setminus X(U \oplus W_2)$.

        \item Now, finally, let $p\in Z_2(V) \setminus X(V)$. We first assume that $\dim(V)\geq\dim(U)$, so $V$ is isomorphic to a vector space of the form $U\oplus V'$, and we can think of $p$ as an element in $Z_2(U\oplus V')\setminus X(U\oplus V')$. By definition of $Z_2$ there exists $\psi:U \oplus V' \to U$ such that $h(P(\psi)p) \neq 0$. This is an open condition on $\psi$, so we can also assume that $\psi$ has full rank. Hence, there exists $g \in \GL(U \oplus V')$ such that $\psi = \pi_V \circ g$, and therefore $P(g)p \in Z_2'(V') \setminus X(U\oplus V')$. Using now the map $\phi:V'\to W_2$ from the previous bullet point we get

        % Then, by definition of $Z_2$, there exists $g\in \End(U\oplus V')$ such that $P(g)p \in Z_2'(V') \setminus X(U\oplus V')$, and hence we have $\phi:V' \to W_2$, such that 
        
        \[P((\id_U \oplus \phi) \circ g)p \in Z_2'(W_2)\setminus X(U \oplus W_2) \subseteq P(U \oplus W_2)\setminus X(U \oplus W_2).\]

        \item So, if $\dim(V)\geq\dim(U)$ we are done, and if $\dim(V)<\dim(U)$ we can instead of $(\id_U \oplus \phi) \circ g$ simply use an inclusion map $\iota: V \to U \oplus W_2$ such that $P(\iota)p \in P(U \oplus W_2)\setminus X(U \oplus W_2).$

        \item Hence, $X$ is determined by $K^{\min(\dim(W'), \dim(W_1), \dim(U\oplus W_2))}$. \qedhere
        
    \end{itemize}

\end{proof}

\bibliographystyle{alphaurl}
\bibliography{draismajournal,draismapreprint,diffeq, constructible}

\end{document}